\newcommand{\ve}{\varepsilon}
\newtheorem{thm}{Theorem}
\newtheorem{cor}{Corollary}
\newtheorem{lemma}{Lemma}
\newcommand{\ka}{\kappa}
\theoremstyle{remark}
\newtheorem{rmk}{Remark}
\begin{document}

\title[On the Zoll deformations of the Kepler problem]{On the Zoll deformations of the Kepler problem}

\author[L. Asselle]{Luca Asselle}
\address{Ruhr-Universit\"at Bochum, Universit\"atsstra\ss e 150, 44801, Bochum, Germany}
\email{luca.asselle@math.uni-giessen.de}

\author[S. Baranzini]{Stefano Baranzini}
\address{Universit\'a degli Studi di Torino, Facolt\'a di Matematica, Via Carlo Alberto 10, 10123, Torino, Italy}
\email{stefano.baranzini@unito.it}

\date{\today}
\subjclass[2000]{....}
\keywords{Mechanical systems, Zoll systems, Kepler problem}

\begin{abstract}
A celebrated result of Bertrand states that the only central force potentials on the plane with the property that all bounded orbits are periodic are the Kepler potential and the potential of the harmonic oscillator. 
In this paper, we complement Bertrand's theorem showing the existence of an infinite dimensional space of central force potentials on the plane which are \textit{Zoll} at a given energy level, meaning that
 all non-collision orbits with given energy are closed and of the same length. We also determine all natural systems on the (not necessarily flat) plane which are invariant under rotations and Zoll at a given energy and prove several existence and rigidity results for systems which are Zoll at multiple energies. 
 \end{abstract}

\maketitle


	\section{Introduction}
	
The Kepler problem has been investigated now for more than three centuries and the contributions of many physicists and mathematicians shed light on many of its peculiar properties. For instance, negative energy solutions are ellipses, hence closed, and their period is a function of the semi-major axis. The system is \emph{super integrable}, meaning that there are more integral of motions than the dimension of the configuration space. Also, despite being singular, 
    the system can be regularized in at least two ways: for each energy level there is a branched two-cover conjugating the Kepler problem to the harmonic oscillator, and a diffeomorphism conjugating it with the geodesic flow on the round sphere.
    
    In this paper we focus primarily on the first property, which hereafter we refer to as the \emph{Zoll property}: non collision orbits are closed and foliate each negative energy level. Historically, Zoll systems (especially Zoll metrics) have been investigated starting with the pioneering work of Darboux \cite{Darboux:1877} and Zoll \cite{Zoll:1903} and have since then been of key importance: indeed, energy levels of Hamiltonian systems for which all orbits are periodic locally optimize symplectic capacities among level sets with fixed volume, see e.g. \cite{Abbo:2018,Abbo:2020, Abbo:2023,Paiva:2014} and reference therein. 
    A consequence of this optimal property is that the
action of periodic orbits of a periodic Hamiltonian flow depends only on the phase space and not on the particular
flow. In view of that, it is a significant problem
to find examples (in a given concrete class) of Hamiltonian systems with the property that all orbits with a given
energy are periodic. In this paper we address such a question in the case of radial potential flows: \emph{How special is the Zoll property of the Kepler problem? Can we perturb the system preserving the Zoll property at least on some negative energy level?}
    
    The first answer in this direction was given by Bertrand \cite{Bertrand:1873} who showed that the only analytic radial potentials on $\mathbb{R}^2$ (with the euclidean metric) such that all bounded orbits are periodic are $ 1/\vert x\vert$, the Kepler potential, and $-\vert x\vert^2$, the potential of the harmonic oscillator.
    A few years later, Darboux constructed two finite dimensional families of analytic perturbations of the Kepler problem having only periodic trajectories on all energy levels. Clearly, the metric is not the euclidean one except for the Kepler problem. For further details we refer to \cite{Darboux:1877,Albouy:2022,Santoprete2008}. 
    
    If one is interested on a single energy level only, say -1, then the space of deformations of the Kepler problem having the Zoll property at energy -1 (meaning that all non collision orbits with energy -1 are periodic) is obviously infinite dimensional. As observed by Larmor \cite{Larmor} and Routh \cite{Routh}, there is an infinite dimensional space of \textit{projective transformation} which modify the dynamics of the Kepler problem at energy -1 only by time reparametrization. However, the metric is not flat in any of the deformed systems. In particular, the question of the existence of central force potentials on the euclidean plane different from the Kepler or the harmonic one and having the Zoll property on the energy level -1 remains open. In this paper we will give a positive answer to this question (Corollary 2), and more generally construct, for any $n \in \mathbb{N}$, infinite dimensional families of deformations of the Kepler problem which satisfy the Zoll property on $n$ distinct negative energy levels. 
    
First, we determine the complete deformation space at one fixed energy level in Theorem \ref{thm:complete_deformation_space} and derive some consequences out of it. In Corollary 1 we show that the space of smooth rotationally invariant metrics $g$ on $\mathbb R^2$ such that the natural system determined by $g$ and the Kepler potential is Zoll at one given energy, say $-1$, is infinite dimensional. Such an abundance of geometries leading to a Zoll system at  energy $-1$ disappears if we require the metric to be analytic. In fact, in Theorem 2 we show that there is no analytic metric of revolution on the plane but the flat metric giving rise (together with the Kepler potential) to a natural system which is Zoll at energy $-1$. In Corollary 2 we then apply Theorem \ref{thm:complete_deformation_space}  to find ``exotic'' central force potential on the plane having the Zoll property at energy $-1$. 

In the second part of the paper, we build a deformation space for multiple energy levels which are ``not too close'' to each other, see Theorem \ref{thm:deformation_fixed_energies} and \ref{thm:existencearbitrary}, and finally discuss some rigidity and flexibility phenomena which occur when considering energy levels which are close to each other, see Corollaries 3-5.     

\vspace{2mm}

\noindent \textbf{Acknowledgments:} L.A. and S.B. warmly thank Gabriele Benedetti for many discussions about the content of the paper.	The second author is partially supported by INDAM-GNAMPA research group and by the PRIN 2022 project 2022FPZEES -- \emph{Stability in Hamiltonian Dynamics and Beyond}.

	
	\section{Main results}
	 \label{sec:main_results}
	
   \subsection{Preliminaries and background}
	Given a smooth manifold $M$ we say that a Hamiltonian $H$ on $T^*M$ is \emph{natural} if for any $q \in M$ the restriction of $H$ to $T^*_qM$ is a positive definite quadratic form. We additionally require $H|_{T^*_qM}$ to be bounded from below if $M$ is non compact. Natural Hamiltonians are of the form:
	\begin{equation}
		\label{eq:natural_ham}
		H(q,p) = \frac{1}{2}g_q(p,p)+V(q),
	\end{equation}
    where $g$ is a Riemannian metric on $M$ and $V$ a smooth function. 	
	To any such $H$ we can associate a Lagrangian function $L:TM\rightarrow \mathbb R$,  which we refer to as \emph{natural Lagrangian}, via the Legendre transform. 
	
	Hereafter, we identify a natural system with the corresponding pair $(g,V)$. 
		Natural systems are sometimes called mechanical since the Euler-Lagrange equations of $L$ describe the motion of a particle subject to the influence of the potential $V = L|_{M_0}$ (here $M_0$ stands for the zero section of $TM$).     
    Let $H$ be a natural Hamiltonian as in \eqref{eq:natural_ham}. It is well known that, for fixed $h\in \mathbb R$, the dynamics on the level set $\{H=h\}$ is equivalent to the geodesic dynamics of the \textit{Jacobi-Maupertuis metric}  
    \[
    ({g}_{JM})_q(p,p) = 2(h-V(q))g_q(p,p)
    \]
    up to time reparametrization. 
    Notice that $g_{JM}$ is degenerate if $h\le \sup_q V(q).$  See e.g. \cite{Montgomery} for more details. 
     Clearly, all solutions of Hamilton's equations contained in $\{H=h\}$ are periodic if and only if all geodesics of $g_{JM}$ are closed.
    
    Riemannian metrics with the property that all geodesics are closed and with the same length, usually known as \textit{Zoll metrics}, 
 have been extensively studied starting from the work of Darboux \cite{Darboux:1877} and Zoll \cite{Zoll:1903} in the last part of the nineteenth century. Among all closed orientable surfaces, 
 the sphere $S^2$ is the only one supporting Zoll metrics. Surprisingly though, $S^2$ admits an abundance of Zoll metrics: their moduli space is infinite dimensional 
and its tangent space at the standard metric has been computed first formally by Funk \cite{Funk:1913} and then rigorously by Guillemin \cite{Guillemin:1976}. Recently, an higher dimensional version of Guillemin's result has been proved by Ambrozio et. al. in \cite{Ambrozio:2021}. A magnetic version for systems on the two-torus is given in \cite{Asselle:2023}.
    
    In this paper, we will focus on the Jacobi-Maupertuis metrics corresponding to negative energy surfaces of the Kepler problem, which we recall is described by the natural Lagrangian
    	\begin{equation}
		\label{eq:lagrangian_cartesian}
		L(v,x) = \frac12 \vert v \vert^2+\frac{1}{\vert x\vert},
	\end{equation}	
	Let us observe that, for a given energy $-h/2$, the energy shell $\{H = -h/2\}$ is not compact. Topologically it is a product $D\times S^1 $ where $D$ is an open disk. Moreover the so called \emph{Hill's region}, i.e. the projection of the energy shell onto the base, is diffeomorphic to the set $\mathcal H_0 =\{x\in \mathbb{R}^2: 0<\vert x\vert\le 1\}$. The metrics induced by (deformations of) the Lagrangian in \eqref{eq:lagrangian_cartesian} via Jacobi-Maupertuis principle are non complete, singular at the origin and degenerate at the boundary of the Hill's region (the so called \emph{Hill's boundary}). However, they share an additional $S^1$ symmetry coming from the rotational invariance. Thanks to this and to the fact that the metrics are degenerate on the Hill's boundary, it is possible to adapt the classical classification result for rotationally invariant metrics on $S^2$ to this setting (see \cite{Besse:book}), thus showing that they are in one-to-one correspondence with odd functions on $[-1,1]$ with suitable boundary conditions. 
    This paper is devoted to the construction of infinite dimensional families of rotationally invariant natural systems close (in the $\mathscr{C}^\infty$ sense) to the Kepler problem which are \textit{Zoll}, meaning that all non-collision trajectories on some fixed energy levels are periodic. 
    In the remaining part of this section we present the main results of the paper. 

	
	\subsection{Zoll deformations at one fixed energy level}
	\label{sec:one_level}
	
	In this section we focus on the deformations of the Kepler problem through rotationally invariant natural systems $(g,V)$ which are Zoll at one fixed (negative) energy. Thus, fix $h>0$ and consider the Euler-Lagrange flow induced by the Lagrangian of the Kepler problem at energy $-h/2$, which (in polar coordinates) can be seen as the flow of the natural Lagrangian
		\begin{equation}
		L_{h,\mathrm{Kep}}:T(\mathbb R^2 \setminus \{0\})\to \mathbb R,\quad L(v_\rho,v_\theta,\rho,\theta) = \frac12 (v_\rho^2 + \rho^2 v_\theta^2) + \frac 1\rho - \frac h2,
		\label{eq:Lagrangiankepler}
		\end{equation} 
		at energy $\{H=0\}$. 
		
		We set 
		$$\mathscr{C}^{\infty}_{\mathrm{odd},0}([-1,1]):=\Big \{ f \in \mathscr{C}^\infty([-1,1])\ \Big |\ f \ \text{odd}, \ f(1)=f(-1)=0\Big \}$$
		and, for $I,J\subset \mathbb R$ small open intervals containing 0, we consider smooth 1-parameter families of maps 
		\begin{align*} 
		&I\to \mathscr{C}^{\infty}_{\mathrm{odd},0}([-1,1]), \quad \epsilon \mapsto f_\epsilon, \quad \text{such that} \quad f_0 = 0, \\ 
		&J\to \mathscr{C}^\infty([0,2/h]), \quad \tau \mapsto \varphi_\tau, \quad \text{such that} \quad \varphi_0 = 0,
		\end{align*}
		and the corresponding family of natural Lagrangians 
	   $(L_{h,f_\ve,\varphi_\tau})$ given by 
		\begin{align}L_{h,f_\ve,\varphi_\tau } & (v_\rho,v_\theta,\rho,\theta) \nonumber \\ &= \frac{e^{- \varphi_\tau(\rho)}}{2}\left( \left(1+ \frac{f_\ve (1-h\rho)}{2-h\rho}\right)^2 v_\rho^2+\rho^2v_\theta^2\right) + e^{\varphi_\tau(\rho)}\left (\frac 1{\rho} - \frac h2\right). \label{eq:deformationkepler}
	\end{align}
	
	\begin{rmk}
 With slight abuse of notation, in \eqref{eq:deformationkepler} we denote by $f_\ve$ resp. $\varphi_\tau$ any smooth extension of $f_\ve$ to $(-\infty,1]$ resp. of $\varphi_\tau$ to $[0,+\infty)$. 
 Notice that changing $f_\ve$ outside $[-1,1]$ or $\varphi_\tau$ outside $[0,2/h]$ does not change the dynamics of $L_{h,f_\ve,\varphi_\tau}$ at energy $\{H=0\}$ 
 because the changes take place outside the Hill's region.
 \end{rmk}
 
	Notice that $L_{h,f_0,\varphi_0}=L_{h,\text{Kep}}$ and that the Jacobi-Maupertuis metric of $L_{h,f_\ve,\varphi_\tau}$ at energy $\{H=0\}$ is given by 
	\begin{equation}
		\label{eq:JM_metric_deformed}
		g_{JM}= \left (\frac{2-h\rho}{\rho}\right )\left (\left (1+ \frac{f_\ve(1-h\rho)}{2-h\rho}\right)^2 \mathrm{d}\rho^2 + \rho^2 \mathrm{d}\theta^2\right),
	\end{equation}
	in particular, it is independent of $\varphi_\tau$. 
	Finally we denote by $g_{f_\ve,\varphi_\tau}$ the metric on $\mathbb R^2\setminus \{0\}$ defining $L_{h,f_\ve,\varphi_\tau}$, namely 
	\begin{equation}
	 \label{eq:deformed_metric}	 
	 g_{f_\ve,\varphi_\tau} (\rho,\theta) := e^{-\varphi_\tau(\rho)} \left ( \left(1+ \frac{f_\ve(1-h\rho)}{2-h\rho}\right)^2 \mathrm d \rho^2+\rho^2\mathrm d \theta^2\right).
	\end{equation}
	
	\begin{thm}
		\label{thm:complete_deformation_space} Let $h>0$ be fixed. The families $(L_{h,f_\ve,\varphi_\tau})$ given in \eqref{eq:deformationkepler}
    are the only possible smooth deformations of \eqref{eq:Lagrangiankepler} through natural Lagrangians which are rotationally invariant and such that the induced Euler-Lagrange flow at energy $\{H=0\}$ is Zoll.	Moreover, $g_{f_\ve,\varphi_\tau}$ extends smoothly to $\mathbb R^2$ if and only if 
    $$f_\ve^{(2k+1)}(1)=\frac{2k+1}{2}f_\ve^{(2k)}(1), \ \ \varphi_\tau^{(2k+1)}(0) =0, \quad \forall k\in \mathbb N_0.$$
	\end{thm}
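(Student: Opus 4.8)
The plan is to decouple the statement into two essentially independent problems: the classification, which constrains only the Jacobi--Maupertuis metric \eqref{eq:JM_metric_deformed}, and the smoothness criterion for \eqref{eq:deformed_metric}, which is a self-contained computation at the origin. For the reduction I would first invoke the Jacobi--Maupertuis principle recalled above: a rotationally invariant natural Lagrangian $\tfrac12 g(v,v)+W$ deforming \eqref{eq:Lagrangiankepler} is Zoll on $\{H=0\}$ if and only if the metric of revolution $g_{JM}=2Wg$ on the Hill region is Zoll. Rotational invariance (together with the reflection $\theta\mapsto-\theta$ coming from a central force, or after a shear in $\theta$) lets me take $g$ diagonal, so $g_{JM}=P(\rho)\,d\rho^2+Q(\rho)\,d\theta^2$. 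The point is then that the splitting of a prescribed $g_{JM}$ into a metric and a potential with $g_{JM}=2Wg$ is unique only up to a positive conformal factor $g\mapsto e^{-\varphi}g$, $W\mapsto e^{\varphi}W$: this is exactly the $\varphi_\tau$--direction, and it manifestly leaves $g_{JM}$ invariant, which is the $\varphi_\tau$--independence observed after \eqref{eq:JM_metric_deformed}. Thus everything reduces to classifying the rotationally invariant Zoll metrics $g_{JM}$.

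For the classification — the main step — I would adapt the Darboux--Guillemin description of Zoll metrics of revolution on $S^2$ (see \cite{Besse:book,Guillemin:1976}) to the present singular/degenerate situation, using the affine substitution $x=1-h\rho\in[-1,1]$, which sends the collision $\rho=0$ to $x=1$ and the Hill boundary $\rho=2/h$ to $x=-1$, the two ``poles.'' Writing Clairaut's first integral for $g_{JM}$, the angle swept by a geodesic of angular momentum $\ell$ over one radial oscillation between the turning points $x=\pm x_0$ takes, once the angular profile is put in the normalized form $\tfrac{1-x^2}{h}$, the shape $\int_{-x_0}^{x_0}\tfrac{\Phi(x)}{(1-x^2)\sqrt{x_0^2-x^2}}\,dx$, where $\Phi$ is the defining radial function. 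The odd part of $\Phi$ integrates to zero by symmetry, so the Zoll condition becomes an Abel integral equation for the even part of $\Phi$ alone; its solution is unique, and since the rotation number is a deformation invariant it must equal the Kepler value, forcing $\Phi_{\mathrm{even}}\equiv 1$ and leaving $\Phi_{\mathrm{odd}}$ free. Setting $\Phi(x)=1+x+f(x)$ with $f$ odd then reproduces the factor $1+\tfrac{f(1-h\rho)}{2-h\rho}$ of \eqref{eq:JM_metric_deformed}, while the degenerate behaviour at the two poles — the requirement that the cone angle at the collision stay $\pi$ and that the Hill boundary remain at finite distance — forces $f(\pm1)=0$, i.e. $f\in\mathscr{C}^{\infty}_{\mathrm{odd},0}([-1,1])$. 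I expect the hard part to be precisely here: carrying out the Abel inversion in the non-compact, singular setting, pinning the radial normalization (equivalently $Q=\rho(2-h\rho)$) through the analysis at the two degenerate ends, and justifying the invariance of the rotation number along the deformation.

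For the ``moreover'' I would use the frame--free smoothness criterion at the origin: a rotationally invariant metric $A(\rho)\,d\rho^2+B(\rho)\,d\theta^2$ extends smoothly across $0$ for the standard structure (in which $\rho$ is the Euclidean radius) if and only if $A$ and $\Psi:=\tfrac{A\rho^2-B}{\rho^4}$ are smooth even functions of $\rho$; this is read off from the Cartesian components $g_{xx}=A-y^2\Psi$, $g_{yy}=A-x^2\Psi$, $g_{xy}=xy\Psi$ with $t=x^2+y^2$. For \eqref{eq:deformed_metric} one has $A=e^{-\varphi_\tau}c^2$ and $B=e^{-\varphi_\tau}\rho^2$ with $c=1+\tfrac{f_\ve(1-h\rho)}{2-h\rho}$, and the identity $A-\rho^2\Psi=e^{-\varphi_\tau}$ shows the two conditions decouple: smoothness is equivalent to $e^{-\varphi_\tau}$ and $c$ being smooth and even in $\rho$, that is $\varphi_\tau^{(2k+1)}(0)=0$ and $c^{(2k+1)}(0)=0$ for all $k$. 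Finally, differentiating $f(x)=(1+x)\cdot\tfrac{f(x)}{1+x}$ by the Leibniz rule at $x=1$ and using $f(1)=0$ turns the condition that $\tfrac{f}{1+x}$ have vanishing odd derivatives at $1$ (which is ``$c$ even'') into the recursion $f^{(2k+1)}(1)=\tfrac{2k+1}{2}f^{(2k)}(1)$, completing the proof. This last part is a routine induction and should present no real difficulty.
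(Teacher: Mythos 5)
Your proposal follows the same architecture as the paper's proof: reduce via the Maupertuis principle to classifying rotationally invariant Zoll metrics, identify the conformal splitting freedom $g\mapsto e^{-\varphi}g$, $W\mapsto e^{\varphi}W$ (which leaves $g_{JM}$ fixed) as the source of the $\varphi_\tau$-direction, put the Jacobi--Maupertuis metric into the Darboux--Besse normal form $(1+f(\cos r))^2\,dr^2+\sin^2r\,d\theta^2$ with $f$ odd via the substitution $\cos r=1-h\rho$, and read off the boundary conditions from smoothness at the origin. The one genuine divergence in the classification step is that you propose to re-derive the normal form by Clairaut's integral plus Abel inversion, whereas the paper simply invokes Proposition 4.10 and Theorem 4.13 of Besse for Zoll metrics of revolution on the open cylinder; you correctly flag the adaptation to the degenerate, non-compact Hill region (and the normalization of the rotation number along the deformation) as the delicate point, which the paper itself handles only by citation and assertion. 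For the ``moreover'' part your route differs in technique and is arguably cleaner: the paper obtains necessity from the $g_{f_\ve,\varphi_\tau}$-norms of the smooth vector fields $\partial_\theta$ and $e^{\varphi/2}\rho\,\partial_\rho$, then runs a Fa\`a di Bruno induction to pass from $(1+G)^2$ to $G$ and a second explicit induction on the derivatives of $(2-h\rho)^{-1}$ to reach the recursion, while you use the Cartesian smoothness criterion ($A$ and $(A\rho^2-B)/\rho^4$ smooth and even), extract $c$ directly by taking a square root (legitimate since $c$ is close to $1$), and obtain the recursion from a one-line Leibniz expansion of $f=(1+x)\cdot\frac{f}{1+x}$ at $x=1$; both versions yield necessity and sufficiency, and I see no gap in yours beyond the classification step you already identified as the hard part.
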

	
	\begin{rmk}
	Combining Theorem 1 with the so called Darboux inversion \cite{Darboux:1889} (see also \cite{Albouy:2022}) one obtains all possible deformations of the harmonic oscillator through rotationally invariant  natural systems which are Zoll at one fixed (positive) energy. The details are left to the reader. 
	\end{rmk}
       
    Theorem \ref{thm:complete_deformation_space} will be proved in Section \ref{sec:proof_one_level}. As already observed, deformations of the flat metric or of the Kepler potential  supported outside the Hill's region $\mathcal H_0$ are dynamically uninteresting. For this reason, we call two metrics resp. two central force potentials \textit{equivalent} if they agree on $\mathcal H_0$. The next two results, which will also be proved in Section \ref{sec:proof_one_level}, are rather immediate corollaries of Theorem 1. 
    
    \begin{cor}
	\label{cor:metric_deformations} For every $h>0$ fixed there is an infinite dimensional space of non-equivalent rotationally invariant smooth Riemannian metrics $g$ on the plane such that the Euler-Lagrange flow 
	of $(g,V_{\mathrm{Kep},h})$ is Zoll at energy $\{H=0\}$. 
    \end{cor}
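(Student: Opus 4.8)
The plan is to read the statement off Theorem \ref{thm:complete_deformation_space} by freezing the potential. First I would note that the natural system associated with \eqref{eq:deformationkepler} has potential part $e^{\varphi_\tau(\rho)}(\tfrac1\rho-\tfrac h2)$; requiring this to equal the Kepler potential $V_{\mathrm{Kep},h}$ throughout the open Hill region forces $e^{\varphi_\tau}\equiv 1$ wherever $\tfrac1\rho-\tfrac h2\neq 0$, hence $\varphi_\tau\equiv 0$ by continuity. Thus, by Theorem \ref{thm:complete_deformation_space}, the rotationally invariant systems of the form $(g,V_{\mathrm{Kep},h})$ that are Zoll at $\{H=0\}$ are exactly those with metric \eqref{eq:deformed_metric} specialized to $\varphi_\tau\equiv 0$, namely
\[
g=g_{f,0}=\Big(1+\tfrac{f(1-h\rho)}{2-h\rho}\Big)^2\mathrm d\rho^2+\rho^2\mathrm d\theta^2,\qquad f\in\mathscr C^\infty_{\mathrm{odd},0}([-1,1]).
\]

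Next I would impose smoothness on $\mathbb R^2$. Since $\varphi_\tau\equiv 0$, the conditions $\varphi_\tau^{(2k+1)}(0)=0$ are vacuous, so by Theorem \ref{thm:complete_deformation_space} the metric $g_{f,0}$ extends smoothly across the origin precisely when $f^{(2k+1)}(1)=\tfrac{2k+1}{2}f^{(2k)}(1)$ for all $k\in\mathbb N_0$. Let $\mathcal F\subset\mathscr C^\infty_{\mathrm{odd},0}([-1,1])$ be the linear subspace cut out by these (linear, homogeneous) constraints at the single endpoint $1$. I claim $\dim\mathcal F=\infty$: every odd smooth function supported in a compact subinterval $[-a,a]\subset(-1,1)$ vanishes to infinite order at $\pm1$, hence satisfies both $f(\pm1)=0$ and the jet relations trivially, and there are infinitely many linearly independent such functions. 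For any such $f$ the radial coefficient equals $1$ near $\rho=0$ and near (and beyond) the Hill boundary $\rho=2/h$, so after rescaling $f$ by a small constant it stays positive and $g_{f,0}$ is a genuine smooth Riemannian metric on all of $\mathbb R^2$, Euclidean near the origin.

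Finally I would establish non-equivalence by showing that $f\mapsto g_{f,0}$ is injective modulo agreement on $\mathcal H_0$. The angular coefficient $\rho^2$ is common to all these metrics, while on the Hill region the radial coefficient $\big(1+\tfrac{f(1-h\rho)}{2-h\rho}\big)^2$, together with positivity of $1+\tfrac{f(1-h\rho)}{2-h\rho}$ for the small deformations above, determines $f(1-h\rho)=(2-h\rho)\big(\sqrt{C(\rho)}-1\big)$. As $\rho$ ranges over $(0,2/h]$ the argument $1-h\rho$ sweeps out $[-1,1)$, so two such metrics agree on $\mathcal H_0$ if and only if the corresponding functions coincide on $[-1,1)$, hence (by continuity and $f(1)=0$) on all of $[-1,1]$. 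Therefore distinct $f\in\mathcal F$ yield pairwise non-equivalent metrics, and restricting to a small ball of $\mathcal F$ produces the desired infinite-dimensional family.

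Everything here is an immediate read-off from Theorem \ref{thm:complete_deformation_space}, so the only genuine bookkeeping is checking that the smoothness-at-the-origin jet conditions, combined with oddness and $f(\pm1)=0$, still leave an infinite-dimensional space; this is handled cleanly by the compactly supported functions above. Non-equivalence is then painless precisely because \emph{equivalent} means literal agreement on $\mathcal H_0$ rather than isometry, so injectivity of $f\mapsto f|_{[-1,1)}$ through the radial coefficient suffices.
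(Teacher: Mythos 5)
Your proposal is correct and follows essentially the same route as the paper: specialize the family of Theorem \ref{thm:complete_deformation_space} to $\varphi_\tau\equiv 0$, so that the potential is exactly $V_{\mathrm{Kep},h}$ and the metric is $g_{f,0}$. The paper's own proof stops there, leaving implicit the two points you spell out explicitly --- that odd functions compactly supported in $(-1,1)$ satisfy the jet conditions at $\pm 1$ trivially and hence give an infinite-dimensional admissible family, and that distinct $f$ yield non-equivalent metrics because the radial coefficient determines $f$ on $[-1,1)$ --- both of which are accurate and welcome additions.
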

    
    None of the metrics in Corollary 1 is analytic. Indeed, we have the following rigidity statement. 
    
         \begin{thm}
   	\label{thm:rigidity}
   	Let $h>0$ be arbitrary. If $g$ is a real analytic rotationally invariant Riemannian metric on the plane such that the system $(g,V_{\mathrm{Kep},h})$ is Zoll on
	 $\{H=-\frac h 2\}$, then $g$ is the flat metric. 
   \end{thm}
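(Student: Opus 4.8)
The plan is to combine the classification of Theorem~\ref{thm:complete_deformation_space} with the requirement that $g$ be a \emph{positive definite analytic} metric on the \emph{entire} plane. I expect analyticity to convert the smoothness-at-the-origin condition together with the oddness of the profile $f$ into a functional equation forcing $f$ to grow, which is then incompatible with $g$ being everywhere non-degenerate unless $f\equiv0$.

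First I would reduce to a normal form. Since the potential is the undeformed $V_{\mathrm{Kep},h}$, the Jacobi--Maupertuis metric of $(g,V_{\mathrm{Kep},h})$ on $\{H=-h/2\}$ is $\tfrac{2-h\rho}{\rho}\,g$; invoking Theorem~\ref{thm:complete_deformation_space} (the potential being undeformed forces $\varphi\equiv0$) and comparing with \eqref{eq:deformed_metric} gives
\[ g=A(\rho)^2\,\mathrm d\rho^2+\rho^2\,\mathrm d\theta^2,\qquad A(\rho)=1+\frac{f(1-h\rho)}{2-h\rho},\quad f\in\mathscr C^{\infty}_{\mathrm{odd},0}([-1,1]). \]
Set $u=1-h\rho$, so that $2-h\rho=1+u$. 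Because $g$ is a Riemannian metric on all of $\mathbb R^2$, the coefficient $A^2$ is everywhere positive and $A(0)=1$, so $A$ is analytic and \emph{strictly positive} on $[0,\infty)$; in particular $f$ extends real-analytically from $[-1,1]$ to $(-\infty,1]$.

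The crux is a functional equation. Smoothness of $g$ at the origin is equivalent to $A$ being an even function of $\rho$, i.e.\ to the analytic identity $(3-u)f(u)=(1+u)f(2-u)$ near $u=1$. Feeding in the oddness $f(-u)=-f(u)$ (valid on $[-1,1]$) gives $(1+u)f(u-2)=-(3-u)f(u)$ near $u=1$; both sides are analytic on $(-\infty,1]$, so the identity propagates there by analytic continuation, and iterating it yields
\[ f(u-2k)=\frac{(u-2k)^2-1}{u^2-1}\,f(u),\qquad u\in(-\infty,1],\ k\in\mathbb N_0. \]
I regard this as the heart of the matter: the automorphy factor is rational with poles only at $u=\pm1$, and these are killed by the boundary conditions $f(\pm1)=0$, so the continuation stays regular and the quadratic growth of $f$ as $u\to-\infty$ is unavoidable.

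Finally I would derive a contradiction from positivity. If $f\not\equiv0$ then, $f$ being odd, there is $u_0\in(-1,1)$ with $f(u_0)<0$, and since $u_0^2-1<0$ we get $\tfrac{f(u_0)}{u_0^2-1}>0$. Evaluating $A$ at the points $\rho_k:=(1-u_0+2k)/h\to+\infty$, for which $u=u_0-2k$, the displayed relation gives
\[ A(\rho_k)=1+\frac{f(u_0-2k)}{1+(u_0-2k)}=1+(u_0-2k-1)\,\frac{f(u_0)}{u_0^2-1}\xrightarrow[k\to\infty]{}-\infty, \]
contradicting $A>0$ on $[0,\infty)$. Hence $f\equiv0$ and $g=\mathrm d\rho^2+\rho^2\,\mathrm d\theta^2$ is flat. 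The main obstacle I anticipate is precisely the justification that analyticity of $g$ on all of $\mathbb R^2$ extends $f$ past the Hill boundary $u=-1$ and that the functional equation governs this honest extension rather than a merely formal one; this, together with the pole cancellation, is exactly where analyticity (as opposed to mere smoothness) is used, and it explains why the flexibility of Corollary~\ref{cor:metric_deformations} collapses to rigidity here.
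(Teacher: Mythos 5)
Your argument is correct, and it takes a genuinely different route from the paper's. Both proofs start from the same two ingredients extracted from Theorem \ref{thm:complete_deformation_space} --- oddness of $f$ on $[-1,1]$ and the origin-smoothness conditions $f^{(2k+1)}(1)=\tfrac{2k+1}{2}f^{(2k)}(1)$ --- but combine them differently. The paper works locally at $x=1$: it Taylor-expands $f$ there, feeds the two conditions into an infinite linear system for the even derivatives $f^{(2k)}(1)$, and observes that the associated operator $T$ is upper triangular with nonzero diagonal entries, hence injective, so the whole jet of $f$ at $1$ vanishes and $f\equiv 0$ by analyticity; this, however, is carried out under an auxiliary assumption on the radius of convergence of $f$ at $x=1$ (taken larger than $e$), with the general case only asserted to be similar. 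You instead repackage the origin condition as the reflection identity $(3-u)f(u)=(1+u)f(2-u)$ (evenness of $A$ in $\rho$), splice in oddness to obtain the step-two functional equation, propagate it over $(-\infty,1]$ by the identity theorem, and read off quadratic growth of $f$, which is incompatible with positive-definiteness of $g$ at large radius. Your route needs no hypothesis on the radius of convergence and so settles the statement in one stroke; the price is that it invokes the global positivity of $g$ on the whole plane, which the paper's coefficient argument never uses. Two small points to make airtight, both of which you essentially anticipate: the seed identity should be asserted for $u\in(1,1+\epsilon)$, where the analytic extension of $A$ across $\rho=0$ supplies $f(u)$ and oddness legitimately applies to $2-u\in[-1,1]$, before invoking the identity theorem on the connected interval $(-\infty,1+\epsilon)$; and the degenerate point $u=-1$ should be handled in the product form $(1+u)f(u-2)=-(3-u)f(u)$ rather than the quotient form, using $f(-1)=0$.
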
  
   
      As originally shown by Darboux \cite{Darboux:1877}, there are two families (a 2- and a 3-parameter family) of mechanical systems on a surface of revolution having the property that all bounded 
    orbits are periodic. As it can be seen from \cite[Section 7]{Albouy:2022}, the corresponding metrics are always analytic. However, the corresponding central force potential is never the Kepler potential, unless $g$ is the flat metric.
    
    \begin{cor}
    	\label{cor:potential_deformations}
    For every $h>0$ fixed there is an infinite dimensional space of non-equivalent central force potentials $V$ on the plane  (containing $V_{\mathrm{Kep},h}$) such that the Euler-Lagrange flow 
	of $(g_{\mathrm{flat}},V)$ is Zoll at energy $\{H=0\}$. 
    \end{cor}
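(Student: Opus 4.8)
The plan is to deduce Corollary 2 directly from Theorem 1 by restricting attention to the subfamily of deformations that keep the metric flat while varying the potential. Looking at \eqref{eq:deformationkepler}, the metric $g_{f_\ve,\varphi_\tau}$ in \eqref{eq:deformed_metric} is the flat metric precisely when $f_\ve \equiv 0$ and $\varphi_\tau \equiv 0$, since the flat metric in polar coordinates is $\mathrm d\rho^2 + \rho^2\,\mathrm d\theta^2$. But the corollary concerns a \emph{nontrivial} deformation of the \emph{potential} on the flat plane, so the first step is to identify which of the parameters $f_\ve,\varphi_\tau$ govern the potential as opposed to the metric. A glance at \eqref{eq:deformationkepler} shows that the potential term is $e^{\varphi_\tau(\rho)}(1/\rho - h/2)$, so the conformal factor $\varphi_\tau$ reshapes the potential, whereas $f_\ve$ only rescales the radial part of the kinetic energy. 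Hence I would first set $f_\ve \equiv 0$ and retain only the $\varphi_\tau$ parameter.

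The main subtlety is that with $f_\ve \equiv 0$ the metric $g_{0,\varphi_\tau} = e^{-\varphi_\tau(\rho)}(\mathrm d\rho^2 + \rho^2\,\mathrm d\theta^2)$ is \emph{not} flat unless $\varphi_\tau \equiv 0$; it is merely conformally flat. So a single choice of $\varphi_\tau$ changes both the potential and the metric simultaneously, and I cannot obtain a flat metric with a deformed potential just by reading off a subfamily. The correct approach is therefore to work backwards from the Jacobi--Maupertuis metric \eqref{eq:JM_metric_deformed}, which I would exploit because it is \emph{independent of} $\varphi_\tau$. The key observation is that the dynamics at energy $\{H=0\}$ depends on $(g,V)$ only through $g_{JM}$, so two systems sharing the same $g_{JM}$ are Zoll simultaneously. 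Thus I want to find pairs $(g_{\mathrm{flat}}, V)$ whose Jacobi--Maupertuis metric coincides with that of some genuinely Zoll deformation in the Theorem 1 family.

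Concretely, for the flat metric $g_{\mathrm{flat}}$ and a rotationally invariant potential $V(\rho)$, the Jacobi--Maupertuis metric at energy $0$ (i.e.\ at energy $-h/2$ for the unshifted Hamiltonian) is $2(\tfrac h2 - V(\rho) \cdots)$ times $g_{\mathrm{flat}}$, and I would impose that this agrees with \eqref{eq:JM_metric_deformed} for a suitable choice of $f_\ve$. Matching the $\mathrm d\theta^2$ coefficients forces $2(\,\cdots - V)\rho^2 = \big(\tfrac{2-h\rho}{\rho}\big)\rho^2$, which determines $V$ uniquely in terms of the deformation, while matching the $\mathrm d\rho^2$ coefficients then forces a compatibility condition that pins down the conformal factor one must allow. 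The cleaner route, which I would actually follow, is to invoke Theorem 1 to produce a Zoll system $(g_{0,\varphi_\tau}, V_{\mathrm{Kep},h})$ with $f_\ve \equiv 0$ and $\varphi_\tau \neq 0$, then apply a \emph{conformal change of coordinates} in $\rho$ (a radial reparametrization $\rho \mapsto r(\rho)$) that straightens $g_{0,\varphi_\tau}$ into the flat metric; since such a reparametrization is a diffeomorphism of the punctured plane preserving rotational symmetry, it carries the Zoll property over and transforms the Kepler potential into a new rotationally invariant potential $V$ on the flat plane, nontrivially deformed as long as $\varphi_\tau \not\equiv 0$.

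The hard part will be verifying two things: first, that the radial reparametrization straightening $g_{0,\varphi_\tau}$ to $g_{\mathrm{flat}}$ exists globally on $\mathcal H_0$, extends smoothly across the origin, and respects the boundary behaviour at the Hill's boundary $\{|x|=1\}$; and second, that the resulting potentials are genuinely \emph{non-equivalent} in the sense defined before the corollary, i.e.\ that distinct admissible $\varphi_\tau$ yield potentials differing on $\mathcal H_0$, so that the family is truly infinite dimensional. For the infinite dimensionality I would observe that the assignment $\varphi_\tau \mapsto V$ is injective on the relevant function space because the potential determines the full radial profile of $g_{JM}$ once the metric is normalised to be flat, and the space of admissible $\varphi_\tau$ (smooth on $[0,2/h]$ with $\varphi_0=0$, subject only to the smoothness constraints $\varphi_\tau^{(2k+1)}(0)=0$ from Theorem 1) is manifestly infinite dimensional. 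Containment of $V_{\mathrm{Kep},h}$ in the family corresponds to $\varphi_\tau \equiv 0$, completing the argument.
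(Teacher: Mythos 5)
Your approach goes wrong at the decisive step: you set $f_\ve \equiv 0$ and try to generate the new potentials from the projective parameter $\varphi_\tau$ alone. This cannot work, for two independent reasons. First, the metric $g_{0,\varphi_\tau} = e^{-\varphi_\tau(\rho)}(\mathrm d\rho^2 + \rho^2\,\mathrm d\theta^2)$ has non-vanishing Gaussian curvature for generic $\varphi_\tau$, and no change of coordinates --- radial or otherwise --- can make a curved metric isometric to the flat one; a radial reparametrization can at best bring a metric of revolution into a form \emph{conformal} to $g_{\mathrm{flat}}$, which is not what the corollary asks for. Second, and more fundamentally, with $f_\ve\equiv 0$ the Jacobi--Maupertuis metric \eqref{eq:JM_metric_deformed} is exactly that of the unperturbed Kepler problem, so you are only reparametrizing the Kepler flow in time. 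If you then try, as you suggest, to realize the pullback of this JM metric under a radial diffeomorphism $\rho=A(\sigma)$ as the JM metric of $(g_{\mathrm{flat}},V)$, matching the $\mathrm d\sigma^2$ and $\mathrm d\theta^2$ coefficients forces $A'/A=1/\sigma$, hence $A(\sigma)=c\sigma$ and $V$ is just a rescaled Kepler potential: a one-parameter family, not an infinite-dimensional one.

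The paper's proof does the opposite. It starts from the genuinely deformed systems $(g_{f_\ve,0},V_{\mathrm{Kep},h})$ with $f_\ve\neq 0$ and $\varphi=0$ (Lagrangian \eqref{eq:deformed_lagrangian_metric}), whose metric $\bigl(1+\tfrac{f_\ve(1-h\rho)}{2-h\rho}\bigr)^2\mathrm d\rho^2+\rho^2\,\mathrm d\theta^2$ is not conformally flat in the given coordinates, and performs a radial change of variable $\rho=A_\ve(\sigma)$ solving the ODE \eqref{eq:change_coordinates_ODE} so as to make it conformal to $g_{\mathrm{flat}}$ (taking $f_\ve=0$ near the origin so that $A_\ve$ is a smooth change of coordinates with $A_\ve'(0)=1$). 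The leftover conformal factor $A_\ve^2(\sigma)/\sigma^2$ is then stripped from the kinetic term by exactly the projective freedom you identified, choosing $\varphi_\ve(\rho)=2\log(\rho/A_\ve^{-1}(\rho))$; this transfers the factor onto the potential, producing a potential proportional to $\tfrac{A_\ve^2(\sigma)}{\sigma^2}\bigl(\tfrac{1}{A_\ve(\sigma)}-\tfrac h2\bigr)$ on the honest flat plane, which differs from $V_{\mathrm{Kep},h}$ on the Hill's region whenever $f_\ve\not\equiv 0$ there. The idea you are missing is that $f_\ve$, not $\varphi_\tau$, is the source of the infinite-dimensional family: $\varphi_\tau$ is only the device that lets one push the deformation from the metric into the potential after the conformal straightening.
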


    
    \subsection{Zoll deformations at several energy levels}
    
    \label{sec:more_levels}

In this section we are interested in those natural systems $(g,V)$ which are Zoll at multiple energy levels. Our first result is about the existence of an infinite dimensional space of non-equivalent rotationally invariant metrics $g$ on the plane such that the system $(g,V_{\text{Kep}})$ is Zoll at two (negative) energies. 

    \begin{thm}
	\label{thm:deformation_fixed_energies}
	Let $h> \kappa >0$ be arbitrary. Then, the space of non-equivalent rotationally invariant metrics $g$ on the plane such that the system $(g,V_{\mathrm{Kep}})$ is Zoll on $\{H=-\frac \kappa 2\}$ and on
	 $\{H=-\frac h 2\}$ is infinite dimensional. 
    \end{thm}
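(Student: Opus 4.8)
The plan is to reduce the two-energy problem to a single functional condition on one radial profile and then exhibit an infinite-dimensional space of solutions. First I would invoke Theorem \ref{thm:complete_deformation_space} at each of the two energies separately. Since the potential is required to be exactly $V_{\mathrm{Kep}}$, the conformal factor is forced to be trivial, $\varphi_\tau\equiv 0$: indeed the deformed potential term in \eqref{eq:deformationkepler} is $e^{\varphi_\tau(\rho)}(1/\rho-\lambda/2)$, and matching it with $1/\rho-\lambda/2$ forces $e^{\varphi_\tau}\equiv 1$. Hence the only rotationally invariant metrics making $(g,V_{\mathrm{Kep}})$ Zoll at energy $-\lambda/2$ (for $\lambda\in\{h,\kappa\}$) are, on the Hill region $\{0<\rho\le 2/\lambda\}$, of the form $g=\bigl(1+f_\lambda(1-\lambda\rho)/(2-\lambda\rho)\bigr)^2\,\mathrm d\rho^2+\rho^2\,\mathrm d\theta^2$ with $f_\lambda\in\mathscr C^\infty_{\mathrm{odd},0}([-1,1])$; in particular the angular coefficient is forced to be the flat $\rho^2$, so the two requirements are compatible only through the single radial coefficient. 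Writing $\phi(\rho):=\sqrt{a(\rho)}-1$ for the radial profile $a(\rho)$, the two Zoll conditions become two reflection symmetries: $(2-h\rho)\phi$ must be anti-symmetric about $\rho=1/h$ on $[0,2/h]$, and $(2-\kappa\rho)\phi$ must be anti-symmetric about $\rho=1/\kappa$ on $[0,2/\kappa]$. Thus I must build one smooth $\phi$ on $(0,2/\kappa]$ satisfying both, with $a=(1+\phi)^2>0$ and the smooth-extension conditions of Theorem \ref{thm:complete_deformation_space} at the origin.

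The structural point is that the two Hill regions are nested, $\{0<\rho\le 2/h\}\subset\{0<\rho\le 2/\kappa\}$, so on the buffer annulus $\{2/h<\rho\le 2/\kappa\}$ only the $\kappa$-condition is active; moreover the composition of the two reflections is the translation $\rho\mapsto\rho-(2/\kappa-2/h)$, so the full symmetry is an infinite dihedral group whose fundamental domain has positive length $1-\kappa/h$ in the variable $s=1-\kappa\rho$. When $h>2\kappa$ the buffer is wide enough to contain an interval centered at $1/\kappa$ and disjoint from $[0,2/h]$, and the argument is immediate: I would take any odd $f_\kappa$ supported in $\{2\kappa/h-1<s<1-2\kappa/h\}$, equivalently $\phi$ supported in $(2/h,\,2/\kappa-2/h)$. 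For such data $\phi$ vanishes identically on the $h$-Hill region, so the $h$-anti-symmetry holds trivially, while the $\kappa$-anti-symmetry holds by oddness of $f_\kappa$; since the support avoids the origin and the Hill boundary, the metric is flat near $\rho=0$ and extends smoothly to $\mathbb R^2$, and distinct choices of $f_\kappa$ differ on an open subset of the Hill region, hence are pairwise non-equivalent. Keeping $f_\kappa$ small in $\mathscr C^0$ guarantees $a=(1+\phi)^2>0$. This already produces an infinite-dimensional family for all $h>2\kappa$.

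For the remaining range $\kappa<h\le 2\kappa$ the buffer is too narrow to host a reflection-symmetric support, and the two anti-symmetries genuinely interact: the reflection about $1/\kappa$ sends part of the annulus back into the $h$-Hill region, forcing $\phi$ to be nonzero there as well. Here I would instead prescribe $\phi$ freely on a single fundamental domain of the dihedral group (still an interval of positive length $1-\kappa/h$) and propagate it to all of $(0,2/\kappa]$ by iterating the two explicit reflection relations, which take the form $\phi(2/h-\rho)=-\tfrac{2-h\rho}{h\rho}\phi(\rho)$ and $\phi(2/\kappa-\rho)=-\tfrac{2-\kappa\rho}{\kappa\rho}\phi(\rho)$. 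This is exactly the step I expect to be the main obstacle: the multiplier in the propagation has zeros and poles at the dihedral orbit of the endpoints $\rho=0,2/h,2/\kappa$, and one must check that the propagated $\phi$ is globally smooth, that it vanishes at the reflection fixed points $\rho=1/h,1/\kappa$ to the correct order (as both anti-symmetries force $\phi(1/h)=\phi(1/\kappa)=0$), and that it satisfies the higher-order smooth-extension conditions at the origin. I would resolve this by choosing the free data on the fundamental domain to vanish to infinite order near the fixed points and endpoints, so that every compatibility and smoothness condition is met automatically while an infinite-dimensional space of admissible profiles remains; a final positivity and non-equivalence check, as above, then completes the proof for all $h>\kappa$.
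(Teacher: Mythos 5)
Your reduction is the same one the paper uses: with the potential pinned to $V_{\mathrm{Kep}}$, Theorem \ref{thm:complete_deformation_space} forces $\varphi\equiv 0$ and a flat angular coefficient, and the two Zoll conditions become two anti-symmetries of a single radial profile, generating an infinite dihedral group with fundamental domain of length $1/\kappa-1/h$. The differences lie in how the two regimes are resolved. For $h>2\kappa$ you support the deformation entirely in the annulus between the two Hill boundaries so that the $h$-condition holds vacuously; the paper instead starts from an arbitrary odd $f$ on the inner Hill region and extends it outward by one reflection (its Case 1). Both yield infinite-dimensional families. In the narrow regime $\kappa<h\le 2\kappa$ your plan — propagate $\phi$ through the multiplier-laden reflection laws and enforce smoothness by infinite-order vanishing — is workable but needs one correction: the zeros and poles of the accumulated multiplier sit on the dihedral orbit of $\{0,\,2/h,\,2/\kappa\}$ (a single orbit, since $R_{1/h}(0)=2/h$ and $R_{1/\kappa}(0)=2/\kappa$), and this orbit meets the fundamental domain $[1/h,1/\kappa]$ at a point which is generically \emph{interior}, not at the reflection fixed points; your free data must also be flattened (or compactly supported away from) that interior point, which you only implicitly cover by the word ``endpoints''. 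The paper sidesteps all of this with a normalization you are missing: setting $G(x)=\tilde f(x)/(1-x^2)$, which in your variables is $\phi(\rho)/(h\rho)$, turns \emph{both} reflection laws into pure anti-symmetries with no multiplier, so $G$ is simply $2\xi$-periodic and odd, any $G\in\mathscr{C}^\infty_{\mathrm c}((-\xi,0))$ extends uniquely to the whole line, and $\tilde f=G(x)(1-x^2)$ automatically vanishes at the Hill boundaries. That single substitution is what collapses your ``main obstacle'' to a one-line argument; with either resolution the conclusion stands.
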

    
We will prove the theorem above in Section \ref{sec:proof_many_levels} by showing that the space $\mathscr{C}^{\infty}_{\mathrm c}([0,1])$ of smooth functions having compact support in $(0,1)$ embeds in the desired space of metrics. Clearly, as to be expected from Theorem \ref{thm:rigidity} none of the metrics arising with this construction is analytic.

  The next result deals with the existence question of Riemannian metrics $g$ such that $(g,V_{\text{Kep}})$ is Zoll at an arbitrary (but finite) number of energies. 
  
  \begin{thm}
  \label{thm:existencearbitrary}
  Let $n\geq 3$ and let $h_1>h_2 > ... > h_n >0$ be such that $h_i\geq 2 h_{i+1}$ for all $i=2,...,n-1$. Then, the space of non-equivalent rotationally invariant metrics $g$ on the plane such that the system $(g,V_{\mathrm{Kep}})$ is Zoll on $\{H=-\frac{h_i} 2\}$, for all $i=1,...,n$, is infinite dimensional. 
  \end{thm}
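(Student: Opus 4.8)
The plan is to use Theorem~\ref{thm:complete_deformation_space} to recast the requirement of being simultaneously Zoll at the energies $-h_1/2,\dots,-h_n/2$ as a system of reflection symmetries for a single function of one variable, and then to produce such functions by an inductive reflection procedure in which the separation hypothesis $h_i\ge 2h_{i+1}$ guarantees that successive symmetries do not interfere. Concretely, write a rotationally invariant metric on $\mathbb R^2$ in polar coordinates as $g=E(\rho)\,\mathrm d\rho^2+G(\rho)\,\mathrm d\theta^2$. For $(g,V_{\mathrm{Kep}})$ the Jacobi--Maupertuis metric at energy $-h_i/2$ is $\tfrac{2-h_i\rho}{\rho}\,g$, and by Theorem~\ref{thm:complete_deformation_space} it is Zoll exactly when it has the normal form \eqref{eq:JM_metric_deformed}. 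Comparing the $\mathrm d\theta^2$--coefficients forces $G(\rho)=\rho^2$, consistently for every $i$, while comparing the $\mathrm d\rho^2$--coefficients gives $E=\big(1+\tfrac{f_i(1-h_i\rho)}{2-h_i\rho}\big)^2$ for some $f_i\in\mathscr C^\infty_{\mathrm{odd},0}([-1,1])$. Setting $\psi:=\sqrt E$,
\[
w(\rho):=\frac{\psi(\rho)-1}{\rho},\qquad c_i:=\frac1{h_i},\qquad b_i:=\frac2{h_i}=2c_i,
\]
and performing the substitution $s=1-h_i\rho$, one checks that oddness of $f_i$ is equivalent to the antisymmetry of $w$ about $c_i$ on the Hill interval $(0,b_i)$,
\[
w(b_i-\rho)=-w(\rho),\qquad \rho\in(0,b_i),
\]
a condition I will call $(\star_i)$. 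Furthermore $g$ extends to a smooth metric on the plane precisely when $w$ extends to a smooth odd function near $\rho=0$ (equivalently, $E$ is a smooth even function of $\rho$ with $E(0)=1$), and $g$ is positive definite iff $1+\rho w>0$. The theorem thus reduces to constructing an infinite-dimensional space of smooth functions $w$, odd at the origin and with $1+\rho w>0$, satisfying $(\star_1),\dots,(\star_n)$ at once.

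I would then record the geometric content of the hypothesis. As $b_i=2c_i$, the inequality $h_i\ge 2h_{i+1}$ for $2\le i\le n-1$ is precisely $c_{i+1}\ge b_i$: the entire Hill interval $(0,b_i)$ of level $i$ sits in the left half $(0,c_{i+1})$ of that of level $i+1$. Hence the reflection $\rho\mapsto b_{i+1}-\rho$ about $c_{i+1}$ carries $(0,b_i)$ into $(b_i,b_{i+1})$, disjoint from $(0,b_i)$, so $(\star_{i+1})$ imposes no relation among points already in $(0,b_i)$ and only prescribes $w$ on the new interval $(c_{i+1},b_{i+1})$ as the reflection of its values on $(0,c_{i+1})$. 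By contrast no separation is assumed between $h_1$ and $h_2$, so $(\star_1)$ and $(\star_2)$ may genuinely interact; this coupled pair is exactly the content of Theorem~\ref{thm:deformation_fixed_energies}.

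The construction itself would be inductive. Theorem~\ref{thm:deformation_fixed_energies} furnishes an infinite-dimensional family (a copy of $\mathscr C^\infty_{\mathrm c}([0,1])$) of admissible $w$ on $(0,b_2)$ satisfying $(\star_1)$, $(\star_2)$ and odd at the origin; I would take this as the seed, scaled small so that $1+\rho w>0$. For $i=2,\dots,n-1$, assuming $w$ is smooth on $(0,b_i)$ and satisfies $(\star_1),\dots,(\star_i)$, I would extend it by choosing on the gap $(b_i,c_{i+1})$ any smooth continuation that matches the jet of $w$ at $b_i$ and is flat at $c_{i+1}$, and then setting $w(c_{i+1}+t):=-w(c_{i+1}-t)$ on $(c_{i+1},b_{i+1})$. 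Because the new values lie in $(b_i,b_{i+1})$, the earlier conditions $(\star_1),\dots,(\star_i)$ are undisturbed and $(\star_{i+1})$ holds by construction. After reaching $(0,b_n)$ I would extend $w$ to $[0,\infty)$ with compact support, so that $g$ is flat for large $\rho$. Distinct seeds already differ on $(0,b_2)$, the Hill region of energy $-h_2/2$, so the resulting metrics are pairwise non-equivalent; since the seed family is infinite-dimensional, so is the family produced, proving the theorem.

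The main obstacle is smoothness at the junction points, and in particular the borderline case $h_i=2h_{i+1}$, where $c_{i+1}=b_i$ and no gap is available on which to flatten $w$ before reflecting. One must then verify that the extension $w(c_{i+1}+t)=-w(c_{i+1}-t)$ glues smoothly across $c_{i+1}=b_i$. This is a jet computation: by $(\star_i)$ the one-sided Taylor expansion of $w$ at $b_i$ is the reflection of the smooth \emph{odd} germ of $w$ at the origin, so $w$ is odd to infinite order about $b_i$---exactly the parity demanded by $(\star_{i+1})$ at $c_{i+1}=b_i$---and the two one-sided jets match, giving a $\mathscr C^\infty$ junction. Checking this compatibility, and that the Theorem~\ref{thm:deformation_fixed_energies} seed already has the correct odd behaviour at $0$ and at $b_2$ to start the induction, is the heart of the matter; the factor $2$ in the hypothesis is precisely the threshold at which the successive reflections decouple.
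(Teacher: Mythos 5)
Your argument is correct and follows essentially the same strategy as the paper's proof: both reduce simultaneous Zollness at all the $-h_i/2$ to a family of reflection symmetries of a single radial profile, exploit the gap condition $h_i\ge 2h_{i+1}$ to make the successive reflections decouple, and build the function inductively outward from the two-level seed family provided by Theorem \ref{thm:deformation_fixed_energies}. Your normalization $w=(\sqrt{E}-1)/\rho$ with reflection centres $c_i=1/h_i$ is the paper's condition on $G(x)=\tilde f(x)/(1-x^2)$ rewritten in the physical radial coordinate, and your jet computation at the borderline junction $c_{i+1}=b_i$ spells out cleanly what the paper dispatches by appealing to Cases 1--2 of the proof of Theorem \ref{thm:deformation_fixed_energies}.
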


Theorem \ref{thm:existencearbitrary} is a rather immediate corollary of Theorem \ref{thm:deformation_fixed_energies} and will be proved in Section \ref{sec:proof_many_levels}. We shall stress the fact that the statement of Theorem \ref{thm:existencearbitrary} holds only for finitely (but arbitrarily) many energy levels and cannot be extended to the case of an infinite number of energies. More details will be provided in Section \ref{sec:proof_many_levels}. If one drops the condition $h_i\geq 2 h_{i+1}$ in the theorem above then the situation becomes more involved, and indeed one has both rigidity and flexibility phenomena. We will illustrate this in Section \ref{sec:multipleclose}, where we will also prove the following

\begin{thm}
	Let $n\geq 3$, $h_1>h_2 > ... > h_n >0$ and $\ve >0$ be fixed. Then, for any $i=1,...,n$ there exists $h'_i$ $\ve-$close to $h_i$ such that the space of non-equivalent rotationally invariant metrics $g$ on the plane for which the system $(g,V_{\mathrm{Kep}})$ is Zoll on $\{H=-h_i'/ 2\}$, for all $i=1,...,n$, is infinite dimensional. 
\end{thm}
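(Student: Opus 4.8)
The plan is to convert the Zoll requirement at each energy into one functional equation for the metric, to recognise that the joint solvability is governed by a commensurability condition on the energies, and finally to perturb the $h_i$ into commensurable position. Write a rotationally invariant metric as $g=E(\rho)\,\mathrm d\rho^2+G(\rho)\,\mathrm d\theta^2$ and set $\Phi:=\rho\sqrt{E/G}$; this ratio depends only on $g$ and, being conformally invariant, is insensitive to the factor $e^{-\varphi_\tau}$ in \eqref{eq:deformed_metric}. Comparing with \eqref{eq:JM_metric_deformed}, Theorem \ref{thm:complete_deformation_space} says that $(g,V_{\mathrm{Kep}})$ is Zoll on $\{H=-h/2\}$ precisely when $(2-h\rho)(\Phi(\rho)-1)$ is odd for the reflection $\sigma_h:\rho\mapsto 2/h-\rho$ of the Hill interval $(0,2/h)$; equivalently, with $\psi:=\Phi-1$, one has $(2-h\rho)\,\psi(\rho)+h\rho\,\psi(2/h-\rho)=0$. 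The decisive point is that $\Phi$, hence $\psi$, does not depend on $h$: asking $(g,V_{\mathrm{Kep}})$ to be Zoll at $-h_1'/2,\dots,-h_n'/2$ imposes on the single function $\psi$ the $n$ weighted reflection relations, one for each $\sigma_i:=\sigma_{h_i'}$ about the centre $c_i:=1/h_i'$, on the nested intervals $(0,2c_i)$. Each relation forces $\psi(c_i)=0$, and smoothness of $g$ across the origin forces the extra (energy independent) jet conditions of Theorem \ref{thm:complete_deformation_space}, whose leading instances read $\psi(0)=\psi(2c_i)=0$.

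Next I would study the group generated by the $\sigma_i$. The relation for $\sigma_i$ transports $\psi$ across its centre via $\psi(\sigma_i\rho)=-\tfrac{2-h_i'\rho}{h_i'\rho}\,\psi(\rho)$, and $\sigma_i^2=\mathrm{id}$ is consistent since the two multipliers multiply to $1$; as every even word is a translation and the infinite dihedral group is torsion free apart from the order two reflections, no further loop relation is imposed. Thus the only constraint beyond smoothness is that $\psi$ vanish on the whole orbit of $\{c_1,\dots,c_n\}$, and here a sharp dichotomy appears. If the differences $c_j-c_i$ are rationally independent this orbit is dense, whence $\psi\equiv 0$ and only the flat metric survives: this is the rigidity half of the story. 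If instead all $c_i$ lie in one lattice $\lambda\mathbb Z$, the reflection walls are exactly the discrete set $\lambda\mathbb Z$, which already contains the origin and every Hill boundary $2c_i$; the forced vanishing $\psi(0)=\psi(2c_i)=0$ then comes for free and there is room for a large solution space. Note that for $n=2$ there is a single translation, automatically commensurable with itself, which is why Theorem \ref{thm:deformation_fixed_energies} needs no hypothesis on the two energies; the phenomenon is genuinely one of $n\ge 3$.

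Therefore I would fix $\lambda>0$ small, pick integers $m_1<\dots<m_n$ with $\gcd\{m_j-m_i\}=1$ and $|\lambda m_i-1/h_i|<\lambda$, and set $h_i':=1/(\lambda m_i)$. For $\lambda$ small this gives $|h_i'-h_i|<\ve$ and places every $c_i$ on $\lambda\mathbb Z$, with the reflections generating translations by all of $2\lambda\mathbb Z$. The infinite dimensional family would then be produced by choosing data in $\mathscr C^\infty_{\mathrm c}$ of a single fundamental cell of length $\lambda$ lying in the bulk, transporting it through the reflection relations to a function $\psi$ on $(0,2c_n)$, and reconstructing the metric from $\Phi=1+\psi$ (for instance as $g=\Phi^2\,\mathrm d\rho^2+\rho^2\,\mathrm d\theta^2$, up to a freely chosen conformal factor accounting for the $\varphi_\tau$ conditions of Theorem \ref{thm:complete_deformation_space}). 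By construction $\psi$ solves every relation, so $(g,V_{\mathrm{Kep}})$ is Zoll on all $\{H=-h_i'/2\}$; distinct cell data yield distinct $\psi$ on the Hill region, hence non equivalent metrics, and the family is infinite dimensional.

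The hard part will not be the combinatorics but the regularity at the origin. Because the relations couple a full neighbourhood of $0$ to the bulk through $\sigma_n$, the transported $\psi$ is generically nonzero near $0$, so the complete infinite jet of $\psi$ at the origin -- equivalently the tower $f^{(2k+1)}(1)=\tfrac{2k+1}{2}f^{(2k)}(1)$ of Theorem \ref{thm:complete_deformation_space} -- must be matched with the flat model, and through the several relations this mixes the transverse derivatives of $\psi$ at the walls coming from the different energies into one infinite linear system. The crux is to show that, in the commensurable case, this system is triangular in the order of the jet and therefore solvable, cutting the space of admissible cell data by only countably many linear conditions and hence leaving it infinite dimensional; the discreteness bought by commensurability is exactly what keeps this coupling locally finite. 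Establishing this triangularity, together with the global smoothness of the transported $\psi$ across all walls, is where the argument must be carried out with care.
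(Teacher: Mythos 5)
Your proposal follows essentially the same route as the paper: rewrite the Zoll condition at each energy as a reflection law for a single function attached to the metric, observe that joint solvability is governed by commensurability of the reflection centres $c_i=1/h_i$, and perturb the energies onto a common lattice so that the generated group becomes infinite dihedral with a compact fundamental cell, from which $\mathscr{C}^\infty_{\mathrm c}$ data on the cell produces the infinite-dimensional family (the paper does this in the variable $x=1-h_1\rho$, where your $\psi/\rho$ becomes $G(x)=\tilde f(x)/(1-x^2)$ and the weighted relations become literal odd symmetry; see Corollary \ref{cor:cor3} and the corollary following it). Two points need attention. First, your consistency claim for the transport is too quick: for $n\ge 3$ commensurable centres the reflection group has many relations beyond $\sigma_i^2=\mathrm{id}$, so you must check that the product of multipliers along any word representing the identity equals $1$. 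It does, because your multiplier $-\tfrac{2-h_i'\rho}{h_i'\rho}$ is exactly $-\sigma_i(\rho)/\rho$, so the product telescopes and along a length-$m$ identity word equals $(-1)^m=1$; equivalently, $\psi/\rho$ is genuinely anti-invariant under every generator, which is precisely the paper's normalisation by $1-x^2$. Second, the ``hard part'' you flag in the last paragraph is vacuous in your own construction: since you placed $0$ and every Hill boundary $2c_i$ on the wall set $\lambda\mathbb Z$ and took cell data compactly supported in an open cell, the transported $\psi$ vanishes identically in a neighbourhood of $\rho=0$ (and of each Hill boundary), the metric is flat near the origin, and the jet conditions of Theorem \ref{thm:complete_deformation_space} hold trivially --- there is no infinite linear system to triangularise. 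This is exactly the device used in the paper (compactly supported seed functions in a fundamental domain), so once you replace your final paragraph by this observation the argument is complete.
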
  
    
	\section{Proof of Theorem \ref{thm:complete_deformation_space}}
	\label{sec:proof_one_level}
	In this section we prove Theorem \ref{thm:complete_deformation_space} and Corollaries \ref{cor:metric_deformations} and \ref{cor:potential_deformations}.
	As we are dealing with a fixed energy level $\{H = -\frac{h}{2}\}$ for some $h>0$, for later purposes it is convenient to consider the \emph{shifted} Lagrangian 
	$$L(v,x) = \frac12 \vert v \vert^2+\frac{1}{\vert x\vert} - \frac h2$$ 
	and the induced Euler-Lagrange flow on the energy hypersurface $\{H=0\}$. 
	
	\begin{proof}[Proof of Theorem \ref{thm:complete_deformation_space}]
	We rewrite the shifted Lagrangian in polar coordinates $x = \rho e^{i\theta}$ and obtain
    \begin{equation}
    	\label{eq:lagrangian_polar}
    	L(v_\rho,v_\theta,\rho,\theta) = \frac12( v_\rho^2+\rho^2v_\theta^2) +\frac{1}{\rho} - \frac h2.
    \end{equation}

    Maupertuis' principle asserts that solutions of the Euler Lagrange equations given by $L$ on the energy hypersurface $\{H=0\}$ correspond up to time reparametrization to geodesics for the Jacobi-Maupertuis metric:
    \begin{equation}
    	\label{eq:JM_metric_Kepler_h}
    	g_{JM} = 2\left(-\frac{h}{2}+\frac1\rho\right)(d\rho^2+\rho^2d\theta^2) = \left(\frac{2-h \rho}{\rho}\right)(d\rho^2+\rho^2d\theta^2).
     \end{equation}
     
     We now rewrite \eqref{eq:JM_metric_Kepler_h} in the normal form given in \cite[Proposition 4.10 and Theorem 4.13]{Besse:book}.
     Let us observe that the coordinate $\rho$ lives in the interval $[0,\frac2h]$. Moreover, the function $\rho(2-h\rho)$ vanishes at the boundary of $[0,\frac2h]$ and has a unique maximum point $\rho =\frac{1}{h}$ where its value is $\frac1h$.
     Let us change coordinates to $(r,\theta)\in [0,\pi]\times [0,2\pi]$ where $r$ is determined by the relation
     \[
    (\sin r)^2 = h\rho(2-h\rho).\]
    Since $2h\rho-(h\rho)^2 = 1-(h\rho-1)^2$ and $\cos r$ is positive on $[0,\pi/2]$, we obtain
     \begin{equation*}
     	\cos r = 1-h\rho \quad \text{ and } \quad \sin r d r = h d \rho.
     \end{equation*}
     
    Summing up and rewriting \eqref{eq:JM_metric_Kepler_h} in these new coordinates, we have
    \begin{align*}
    	g_{JM} & = \frac{(2-h\rho)(\sin r)^2}{h^2\rho} dr^2 + (\sin r)^2 d \theta^2 \\ & =\frac1h\left( (2-h\rho)^2dr^2 + (\sin r)^2 d \theta^2\right) \\ & =\frac1h((1+\cos r)^2dr^2 + (\sin r)^2 d \theta^2) .
    \end{align*}
    
    Thanks to  \cite[Proposition 4.10 and Theorem 4.13]{Besse:book}, we know that smooth Zoll metrics of revolution on the (open) cylinder $(0,\pi)\times [0,2\pi]$ are given by:
    \begin{equation}
    	g = (1+f(\cos r))^2dr^2 + (\sin r)^2 d \theta^2,
	\label{eq:Tannery}
    \end{equation}
    where $f:(-1,1)\to (-1,1)$ is any smooth \emph{odd} function (for the Kepler problem we have $f=\text{id}|_{(-1,1)}$).  We now consider a smooth one-parameter family $\{f_\ve:(-1,1)\rightarrow \mathbb R\}$, 
    with $\ve$ sufficiently close to zero, such that $f_0=\text{id}|_{(-1,1)}$ and set 
    \begin{equation*}
    	g_{f_\ve} = \frac1h((1+\cos r+ f_\ve(\cos r))^2dr^2 + (\sin r)^2 d \theta^2).
    \end{equation*}
    We readily see that a necessary condition for $g_{f_\ve}$ to be well-defined for $|\ve|$ sufficiently small is that $f_\ve$ extend continuously to $f_\ve:[-1,1]\to \mathbb R$ and $f_\ve(1)=f_\ve(-1)=0$. 
    Switching back to the old coordinates $(\rho,\theta)$ and recalling that $\cos r = 1-h\rho $ we obtain
    \begin{align}    	
    	\label{eq:JM_metric_deformation_h}
    	g_{f_\ve}&=\frac{(2-h\rho+f_\ve(1-h\rho))^2}{\rho (2-h\rho)} d \rho^2+\rho(2-h\rho)d\theta^2 \nonumber \\
    	&=\left(\frac{2-h\rho}{\rho} \right) \left( \left(1+ \frac{f_\ve (1-h\rho)}{2-h\rho}\right)^2 d \rho^2+\rho^2 d\theta^2\right).
    \end{align}
    Using \eqref{eq:JM_metric_deformation_h} we see that $g_{f_\ve}$ (for $\ve$ small enough) extends to a smooth metric on $\{\rho>0\}$ (that is, on $\mathbb R^2\setminus \{(0,0)\}$) if and only if
    
    $$f_\ve\in \mathscr{C}^\infty_{\text{odd},0}([-1,1]) := \Big \{ f\in \mathscr{C}^\infty([-1,1], \mathbb R) \ \Big |\ f \ \text{odd}, \  f(-1)=f(1)=0\Big \}.$$
    
    For any such $f_\ve\in \mathscr{C}^\infty_{\text{odd},0}([-1,1])$, we recognize $g_{ f_\ve}$ in \eqref{eq:JM_metric_deformation_h} 
     as the Jacobi-Matupertuis metric of the system given by the Lagrangian function:
    \begin{equation}
    	\label{eq:deformed_lagrangian_metric}
    	L_{f_\ve}(v_\rho,v_\theta,\rho,\theta) = \frac12\left( \left(1+\frac{f_\ve(1-h\rho)}{2-h\rho}\right)^2 v_\rho^2+\rho^2v_\theta^2\right) +\frac{1}{\rho}- \frac h2,
    \end{equation}
    at energy $\{H=0\}$. 
    Notice that the deformations given by the family $L_{f_\ve}$ above change the phase portrait on the energy level $\{H=0\}$. There is a further class of deformations, usually referred to as \emph{projective}, which change the flow at energy $0$ only by a time reparametrization:   
  for any smooth radial function $\varphi:[0,+\infty)\to \mathbb R$ the Jacobi-Maupertuis metric associated with the Lagrangian
    \begin{equation*}
    	\frac{e^{-\varphi(\rho)}}{2}\left( \left(1+ \frac{f_\ve(1-h\rho)}{2-h\rho}\right)^2 v_\rho^2+\rho^2v_\theta^2\right) + e^{\varphi(\rho)} \left (\frac 1{\rho}- \frac h2\right )
    \end{equation*}
    at energy $0$ is precisely $g_{f_\ve}$. We have thus proved that the space of deformations of \eqref{eq:lagrangian_polar} preserving the Zoll property at energy $0$ is given by
     \begin{align}
     	\label{eq:deformed_lagrangian}
    	L_{f_\ve,\varphi} (v_\rho,&v_\theta,\rho,\theta) \nonumber \\ & = \frac{e^{-\varphi(\rho)}}{2}\left( \left(1+\frac{f_\ve(1-h\rho)}{2-h\rho}\right)^2 v_\rho^2+\rho^2v_\theta^2\right) + e^{\varphi(\rho)}\left (\frac 1{\rho} - \frac h2\right).
    \end{align}  
    where $\varphi\in \mathscr{C}^\infty ([0,+\infty))$, $f_\ve\in \mathscr{C}^\infty_{\text{odd},0}([-1,1])$, and $\ve$ is small enough. 

    Now we have to derive necessary and sufficient conditions for the metric $g_{f_\ve,\varphi}$ in \eqref{eq:deformed_metric} to extend smoothly at $0$. First we look at the conditions to be imposed on $\varphi$. Let us observe that the vector field $\partial_\theta = -y\partial_x+x\partial_y$ is smooth on $\mathbb{R}^2$. Thus, if $g_{f_\ve,\varphi}$ is smooth so does the norm of $\partial_\theta$. A straightforward computation yields:
    \begin{equation*}
    	g_{f_\ve,\varphi}(\partial_\theta,\partial_\theta) = \frac12\rho^2 e^{-\varphi(\rho)}.
    \end{equation*}    
    This implies that $\varphi$ should extend to a smooth function on $\mathbb{R}^2$ and so all odd derivatives of $\varphi$ should vanish at the origin. Let us consider now the vector field  
    $$X = e^{\varphi(\rho)/2}\rho \partial_{\rho} = e^{\varphi(\rho)/2}( x\partial_x+y\partial_y)$$ 
    which is again smooth. It follows that its $g_{f_\ve,\varphi}$-norm is:
   \begin{equation*}
	g_{f_\ve,\varphi}(X,X) = \frac12\rho^2 \left(1+ \frac{f_\ve(1-h\rho)}{2-h\rho}\right)^2.
   \end{equation*}    
   Thus if $g_{f_\ve,\varphi}$ is smooth all odd derivatives of the function 
    $$\rho \mapsto \left(1+ \frac{f_\ve(1-h\rho)}{2-h\rho}\right)^2$$ 
    should vanish at the origin. By induction we prove that this happens if and only if all odd derivatives of $f_\ve(1-h\rho)/(2-h\rho)$ vanish at the origin. Indeed, the following well-known formula for the derivatives of the composition of functions gives:
   \begin{equation*}
   	\frac{d^n}{d\rho^n} F(G(\rho)) = \sum_{\pi} F^{(\vert\pi\vert)}(G(\rho))\prod_{B\in \pi}G^{(\vert B\vert)}(\rho),
   \end{equation*}
    where $\pi$ is a partition of $\{1,\dots,n\}$, $\vert \pi\vert$ the total number of its blocks , $B$ a block of $\pi$ and $\vert B\vert$ its cardinality. Since in our case 
    
    $$F(\rho) = (1+\rho)^2, \quad G(\rho) =  \frac{f_\ve(1-h\rho)}{2-h\rho},$$ 
    we consider only $\pi$ of $\vert \pi\vert\le 2$. By induction, we assume the claim to hold up to order $2k-1$. For $s:=2k+1$ we obtain using the fact that $G(0)=0$:
    \begin{align*}
    	\frac{d^{s}}{d\rho^{s}} & \Big |_{\rho =0} F\circ G(\rho) \\
	&= 2 (1+ G(\rho)) \frac{d^{s}}{d\rho^{s}} G(\rho) + 2 \sum_{\pi : |\pi|=2} \prod_{B\in \pi}\frac{d^{\vert B\vert}}{d\rho^{\vert B\vert}} G(\rho) \Big |_{\rho =0}\\
	&= 2  \frac{d^{s}}{d\rho^{s}}\Big  |_{\rho =0} G(\rho) + 2 \sum_{\pi : |\pi|=2} \prod_{B\in \pi}\frac{d^{\vert B\vert}}{d\rho^{\vert B\vert}} \Big |_{\rho =0} G(\rho).
    \end{align*}
    Now, it is readily seen that each product 
    $$\prod_{B\in \pi}\frac{d^{\vert B\vert}}{d\rho^{\vert B\vert}} \Big |_{\rho =0} G(\rho)$$
    vanishes by inductive hypothesis, as it contains exactly one lower order odd derivative. It follows that 
    $$	\frac{d^{s}}{d\rho^{s}} \Big |_{\rho =0} F\circ G(\rho) =  2  \frac{d^{s}}{d\rho^{s}}\Big  |_{\rho =0} G(\rho),$$
    thus proving the claim. 
     
     Therefore, a necessary condition for $g_{f_\ve,\varphi}$ to extend smoothly to $\mathbb R^2$ is that $f_\ve(1-h\rho)/(2-h\rho)$ extends to a smooth function on $\mathbb{R}^2$. 
     Finally, let us rewrite this last condition in terms of derivatives of $f_\ve$. Assuming that the $(2k-1)$-th derivative of $f_\ve(1-h\rho)/(2-h\rho)$ vanish at the origin, and using the fact that 
     $$\frac{d^n}{d\rho^n} \Big |_{\rho=0} \left (\frac{1}{2-h\rho}\right ) = \frac{h^n n!}{2^{n+1}}, \ \forall n\in \mathbb N,$$
     we compute 
     \begin{align*}
     		\frac{d^{2k+1}}{d\rho^{2k+1}} \Big |_{\rho=0} \left(\frac{ f_\ve(1-h\rho)}{2-h\rho}\right) &= \frac{h^2(2k+1)!}{4(2k-1)!} 	\frac{d^{2k-1}}{d\rho^{2k-1}} \Big |_{\rho =0}\left( \frac{f_\ve (1-h\rho)}{2-h\rho}\right)  \\ & +\frac{h^{2k+1}(2k+1)}{4}f_\ve^{(2k)}(1)-\frac{h^{2k+1}}{2}f_\ve^{(2k+1)}(1)\\ 
		&= \frac{h^{2k+1}(2k+1)}{4}f_\ve^{(2k)}(1)-\frac{h^{2k+1}}{2}f_\ve^{(2k+1)}(1) 
     \end{align*}
     by inductive hypothesis. This implies that $f_\ve$ satisfies
     $$f_\ve^{(2k+1)}(1) = \frac{2k+1}{2}f_\ve^{(2k)}(1), \ \ \forall k \ge0.$$ 
     The converse statement is straightforward and completely analogous.
   \end{proof}

    
    \begin{proof}[Proof of Corollary \ref{cor:metric_deformations}]
    	Take $\varphi_\tau=0$ for all $\tau\in J$. Then $g_{f_\ve,0}$ as above reads 
    	$$g_{f_\ve,0} (\rho,\theta) =   \left(1+ \frac{f_\ve(1-h\rho)}{2-h\rho}\right)^2 \mathrm d \rho^2+\rho^2\mathrm d \theta^2$$
    	and the corresponding natural Lagrangian
    	$$\frac{1}{2}\left( \left(1+ \frac{f_\ve (1-h\rho)}{2-h\rho}\right)^2 v_\rho^2+\rho^2v_\theta^2\right) + \left (\frac 1{\rho} - \frac h2\right)$$ 
    	is precisely the one defined by $(g_{f_\ve,0},V_{\text{Kep},h})$. 
    \end{proof}
    
    \begin{proof}[Proof of Corollary \ref{cor:potential_deformations}]
    	We start from the natural system $(g_{f_\ve,0},V_{\text{Kep},h})$ 
    	and perform a change of coordinates which makes the metric conformal to the flat metric.
    	Let us consider a new coordinate $\sigma$ and set $ \rho = A_\ve(\sigma)$ for some function $A_\epsilon$. Using $\mathrm d\rho = A_\ve'(\sigma) \mathrm d\sigma$ we compute for the pull-back metric:
    	\begin{align*}
    		g_\ve &= \frac{1}{2}\left(\left(1+\frac{f_\ve(1-hA_\ve(\sigma))}{2-hA_\ve(\sigma)}\right)^2A_\ve'(\sigma)^2 \mathrm d\sigma^2+A_\ve^2(\sigma)\mathrm d\theta^2\right)\\ &=\frac{A_\ve^2(\sigma)}{2\sigma^2}\left(\frac{\sigma^2 A'_\ve(\sigma)^2}{A_\ve^2(\sigma)}\left(1+\frac{f_\ve(1-hA_\ve(\sigma))}{2-hA_\ve(\sigma)}\right)^2 \mathrm d\sigma^2+\sigma^2\mathrm d\theta^2\right) .
    	\end{align*}
    	From this we readily see that $g_\ve$ is conformal to the euclidean metric if and only if $A_\ve$ solves the equation
    	\begin{equation}
    		\label{eq:change_coordinates_ODE}
    		A_\ve'(\sigma) = \frac{A_\ve(\sigma)(2-hA_\ve(\sigma ))}{\sigma (2-hA_\ve(\sigma)+f_\ve(1-hA_\ve(\sigma)))}.
    	\end{equation}
    
    	 As we are not interested in determining all central force potentials $V$ such that $(g_{\text{flat}},V)$ is Zoll at energy $\{H=0\}$, but rather want only to show that the space of such potentials is infinite dimensional, we  assume hereafter that $f_\ve = 0 $ in a small neighbourhood of the origin for all $\ve$. In this case, picking $A_\ve$ such that $A'_\ve(0) =1$ we obtain a smooth change of coordinates.
    	 
    	  In the new coordinates $(\sigma,\theta)$  the Lagrangian reads: 
    	\begin{align}
    		L_{h,f_\ve,0}(v_\sigma, v_\theta,\sigma, \theta) = \frac{A_\ve^2(\sigma)}{2\sigma^2} \left( v_\sigma^2+\sigma^2v_\theta^2\right) + \left ( \frac 1{ A_\ve(\sigma)} - \frac h2\right ),
    	\end{align}
    	and hence choosing $\tau=\ve$ and $\varphi_\ve (\rho):= 2\log (\frac{\rho}{A^{-1}_\ve(\rho)})$ we obtain 
    	$$L_{h,f_\ve,\varphi_\ve}(v_\sigma, v_\theta,\sigma, \theta) =  \left( v_\sigma^2+\sigma^2v_\theta^2\right) + \frac{A_\ve^2(\sigma)}{\sigma^2}  \left ( \frac 1{ A_\ve(\sigma)} - \frac h2\right ).$$
    	The claim readily follows.
    \end{proof}
    
   \begin{proof}[Proof of Theorem \ref{thm:rigidity}]
    Assume that $f$ is an analytic odd function on $\mathbb R$ vanishing at $x=\pm 1$ and such that the metric $g_f$ given in \eqref{eq:JM_metric_deformation_h} is smooth on the whole $\mathbb R^2$. By Theorem 1 we have that
  \begin{equation}
  f(1)=0, \quad f'(1) = 0, \quad f^{(2k+1)}(1) = \frac{2k+1}{2} f^{(2k)}(1), \ \forall k \in \mathbb N.
  \label{eq:fanalytic}
  \end{equation}
   Assuming further that the radius of convergence of the series expansion of $f$ at $x=1$ is sufficiently large (in particular, so that $x=0$ is contained in the convergence radius, so that exchanging series is allowed) we can compute 
   \begin{align*}
   f(x) &= \sum_{k=1}^{+\infty} \Big ( \frac{f^{(2k)}(1)}{(2k)!} (x-1)^{2k} + \frac{f^{(2k+1)}(1)}{(2k+1)!} (x-1)^{2k+1}\Big )\\ 
    &=  \sum_{k=1}^{+\infty} \frac{f^{(2k)}(1)}{2 (2k)!} (x-1)^{2k} (x+1)\\
    &=  \sum_{k=1}^{+\infty} \frac{f^{(2k)}(1)}{2 (2k)!} (x+1)\sum_{j=0}^{2k} \binom{2k}{j} (-1)^{2k-j} x^j\\
    &=  \sum_{j=0}^{+\infty} (-1)^j x^{j} \sum_{k=\lceil \frac j2\rceil}^{+\infty} \binom{2k}{j} \frac{f^{(2k)}(1)}{2 (2k)!}  + \sum_{j=0}^{+\infty} (-1)^j x^{j+1} \sum_{k=\lceil \frac j2\rceil}^{+\infty} \binom{2k}{j} \frac{f^{(2k)}(1)}{2 (2k)!} \\
    &= \underbrace{\sum_{k=1}^{+\infty} \frac{f^{(2k)}(1)}{2 (2k)!}}_{:=(1)} +\sum_{j=1}^{+\infty} (-1)^j x^{j} \underbrace{\Big ( \sum_{k=\lceil \frac j2\rceil}^{+\infty} \binom{2k}{j} \frac{f^{(2k)}(1)}{2 (2k)!} - \sum_{k=\lceil \frac{j-1}2\rceil}^{+\infty} \binom{2k}{j-1} \frac{f^{(2k)}(1)}{2 (2k)!} \Big )}_{=:(2)}. 
   \end{align*}
   Since $f$ is odd, (1) has to vanish and (2) has to vanish too for all even $j\in \mathbb N$. For simplicity we replace $j$ with $2h$, $h\in \mathbb N$, and rewrite (2) as 
   \begin{align*}
   0 & =  \sum_{k=h}^{+\infty} \binom{2k}{2h} \frac{f^{(2k)}(1)}{2 (2k)!} - \sum_{k=h-1}^{+\infty} \binom{2k}{2h-1} \frac{f^{(2k)}(1)}{2 (2k)!}\\ 
   	&= \sum_{k=h}^{+\infty} \frac{f^{(2k)}(1)}{2 (2k)!} \Big (  \binom{2k}{2h} - \binom{2k}{2h-1}\Big ), \quad \forall h \in \mathbb N.
	\end{align*}
	
	We set $a:=(a_{k})_{k\in 2 \mathbb N}$ by $a_{k}:= \frac{f^{(k)}(1)}{2 (k)!}$ for all $k\in 2 \mathbb N$ and observe that $a\in s(2 \mathbb N)$, the space of rapidly decreasing sequences (in particular $a\in \ell^2(2 \mathbb N)$), 
        since the radius of convergence of the series expansion of $f$ at $x=1$ is by assumption larger than 1. We now define the (unbounded) linear operator 
	$$T:s(2 \mathbb N) \subset \ell^2(2 \mathbb N)\to \ell^2(\mathbb N), \quad u \mapsto Tu,$$
	where 
	\begin{align*}
	(Tu)_h &:= \sum_{k=h,\  k\  \text{even}}^{+\infty} a_k \Big (  \binom{k}{2h} - \binom{k}{2h-1}\Big ), \quad \forall h \in \mathbb N.
	\end{align*}
	It is elementary to check that, if the radius of convergence of the series expansion of $f$ at $x=1$ is larger than $e$, then $T$ is well-defined and maps $s(2\mathbb N)$ into $\ell^2(\mathbb N)$. Also, it is straightforward to see that $T$ is injective. Indeed, with respect to the standard Hilbert bases of $\ell^2(2 \mathbb N)$ and $\ell^2(\mathbb N)$ respectively it is represented by an (infinite) upper-triangular matrix having all diagonal entries different from zero, more precisely equal to $1-k$ for all $k\in 2\mathbb N$. In particular, the equation $Tu=0$ has only the trivial solution. This implies that all even derivatives of $f$ in $x=1$ vanish, and hence using \eqref{eq:fanalytic}, that all derivatives of $f$ vanish in $x=1$. 
	
The general case can be dealt with in a similar fashion. 
    \end{proof}

      \section{Proof of Theorem \ref{thm:deformation_fixed_energies}}
      \label{sec:proof_many_levels}
   
   \begin{proof}[Proof of Theorem \ref{thm:deformation_fixed_energies}]
   	Fix $h>\ka$ and consider the energy levels  $\{H=-\frac{h}{2}\}$ and  $\{H=-\frac{\ka}{2}\}$. The corresponding Hill's regions are two disks of radii $\frac{2}{h}$ and $\frac{2}{\ka}.$ 
   	
   	Assume that a deformation as given in \eqref{eq:deformed_lagrangian_metric} on the energy level $\{H=-\frac{h}{2}\}$ is fixed (for notational convenience we hereafter drop the subscript $\ve$) and denote by $\tilde{f}$ any extension of $f$ to the interval $[1-\frac{2h}{\ka},1]$. Using \eqref{eq:JM_metric_Kepler_h} and \eqref{eq:JM_metric_deformation_h} we see that $\tilde f$ induces the following Jacobi-Maupertuis metric on the disk of radius $\frac{2}{\ka}$:
    \begin{align*}
 	g_{JM} &= \frac{2-\ka\rho }{\rho}\left(\left(1+\frac{\tilde f \left(1-h \rho \right)}{2-h\rho }\right)^2d\rho^2+ \rho^2 d\theta^2\right)\\
 	&= \frac{2-\ka\rho }{\rho}\left(\left(1+\frac{\tilde f \left(1-h\rho \right)(2-\ka\rho)}{(2-h\rho )(2-\kappa\rho)}\right)^2d\rho^2+ \rho^2 d\theta^2\right).
    \end{align*}
    Clearly, $g_{JM}$ has only closed solutions on $\{H = -\frac{\ka}{2}\}$ (besides the collision orbits) if and only if the function:
    \begin{equation*}
    	F(1-\ka \rho) = \frac{ \tilde f \left(1-h\rho \right)(2-\ka\rho)}{2-h\rho}, \quad \rho \in \left[0,\frac{2}{\ka}\right],
    \end{equation*}
     is odd on $[-1,1]$. 
     We define $$\xi := \frac{h}{\ka}-1>0$$ 
     and consider the change of variable $x = 1-h\rho$. A straightforward computation shows that $1-\kappa \rho  = \frac{x +\xi}{\xi+1}$  and allows us to rewrite the above equation as follows:
     \begin{equation*}
     	F\left(\frac{x +\xi}{\xi+1}\right) = \frac{\tilde{f}(x)(x +2\xi+1)}{(\xi+1)(1+x)}, \quad  x\in[-1-2\xi,1]
     \end{equation*}
     Denote by $R_{-\xi}$ the reflection about $-\xi$. Imposing the odd parity of $F$ we find:
     \begin{align*}
     	F\left(-\frac{x +\xi}{\xi+1}\right)&=- \frac{\tilde{f}(x)(x +2\xi+1)}{(\xi+1)(1+x)}=  \frac{\tilde{f}(R_{-\xi}(x))(1-x)}{(\xi+1)(1+R_{-\xi}(x))}&=F\left(\frac{R_{-\xi}(x) +\xi}{\xi+1}\right).
     \end{align*}
    Observing that $x+2\xi+1 = 1-R_{-\xi}(x),$ we deduce that the odd parity of $F$ can be rewritten as a condition on the extension $\tilde{f}$. Indeed, $F$ is odd if and only if the function $G(x) = \tilde{f}(x)/(1-x^2)$ is odd with respect to the reflection $R_{-\xi}$. Therefore, we obtain for $\tilde f$ the following reflection laws:
    \begin{align}
    	\label{eq:parity_tilde_f}
    	 \frac{\tilde{f}(R_{-\xi}(x))}{1-R_{-\xi}(x)^2} & =- \frac{\tilde{f}(x)}{1-x^2}, \quad \forall x\in[-1-2\xi,1], \\
    	\tilde{f}(-x) &= -\tilde{f}(x), \quad \forall x \in [-1,1]. \label{eq:ftildeodd}
    \end{align}
    
    Furthermore, we readily see that if $G(x)$ is smooth, then $F$ and $\tilde f$ both vanish at $x=\pm1$.
    We have now to consider three cases:    
    \vspace{2mm}
    
    {\textbf{Case 1($h>2\ka$).}} Let us observe that in this case $\xi>1$ and so the interval $[-1,1]\subset[-\xi, 1]$ and in this case the inclusion is strict. Thus, it is enough to choose a suitable extension of $f$ to $[-\xi,1]$ and then reflect it on the whole $[-1-2\xi,1]$. This imposes certain conditions on the derivatives of  $\tilde {f}$ in $-\xi$. Namely $\tilde{f}^{(2n)}(-\xi) =0$ for all $n \in \mathbb{N}$. Clearly, this is the case if $\tilde{f}$ is compactly supported in $(-\xi, 1)$.
    
    \vspace{2mm}
    
    {\textbf{Case 2 ($h=2\ka$).}} This case is completely analogous to the previous one except for the fact that now $\xi =1$ and so $[-\xi, 1] = [-1,1]$. Thus, it is natural to ask that the derivatives of $f$ vanish at all order in $-1$.
    
    \vspace{2mm}
    
    {\textbf{Case 3 ($\ka<h<2\ka$).}} 
     In this case $\xi \in (0,1)$ and the condition that $F$ be odd imposes further restrictions on $\tilde{f}.$ Let us observe that in this case, the function $G(x)$ is $2\xi-$periodic. Indeed, using \eqref{eq:parity_tilde_f} and \eqref{eq:ftildeodd} we compute
     \begin{align*}
     	G(x) =- G(R_{-\xi}(x))= - \frac{\tilde f( -x-2\xi)}{1+ (-x-2\xi)^2} = \frac{\tilde f(x+2\xi)}{1+ (x+2\xi)^2} = G(x+2\xi).
     \end{align*}
Since $G$ is also odd with respect to the reflections about $0$ and $-\xi$, we infer that $G$ is determined by its restriction to $[-\xi,0].$ In particular, any function compactly supported in $[-\xi,0]$ uniquely extends to a function $G$ on $[-1-2\xi,1]$ with the desired properties. Therefore, the space
 $\mathscr{C}^\infty_c([-\xi,0])$ injects in the space of deformations preserving the Zoll property on $\{H =-\frac{h}{2}\}$ and $\{H =-\frac{k}{2}\}$.
     \end{proof}
    \begin{figure}[h]
 	\centering
 	\begin{subfigure}{.5\textwidth}
 		\centering
 		\includegraphics[width=0.9\textwidth]{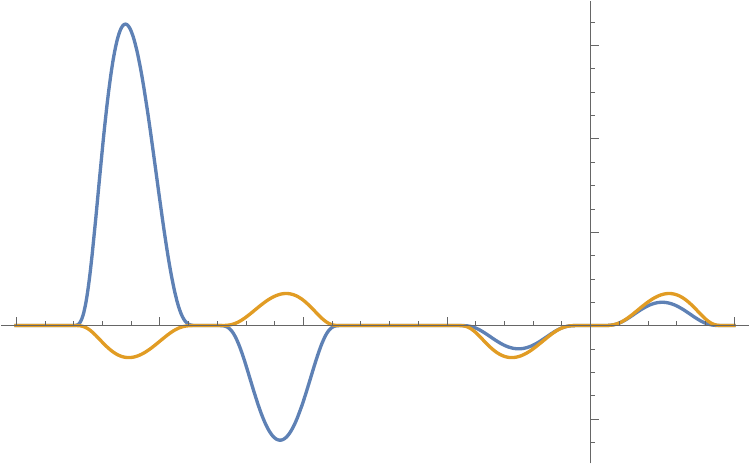}
 	\end{subfigure}%
 	\begin{subfigure}{.5\textwidth}
 		\centering
 		\includegraphics[width=0.9\textwidth]{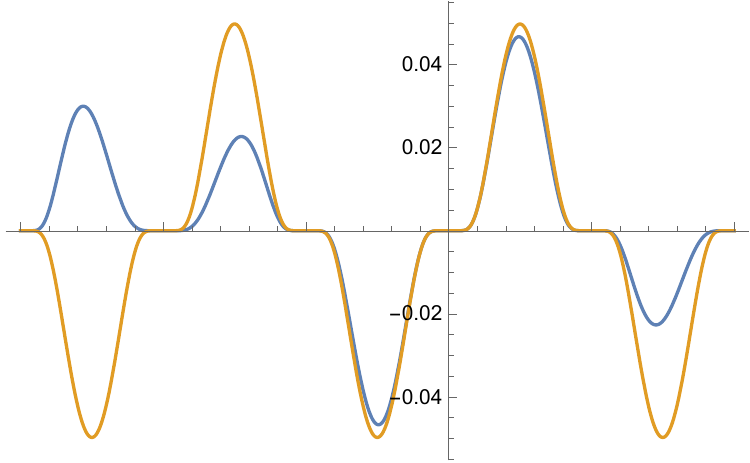}
 	\end{subfigure}
 	\caption{A) The functions $\tilde f$ (in blue) and $G$ (in yellow) of {Case 1} of Theorem \ref{thm:deformation_fixed_energies} with $\xi = 3/2.$  B) The functions $\tilde f$ (in blue) and $G$ (in yellow) of {Case 3} of Theorem \ref{thm:deformation_fixed_energies} with $\xi = 1/2$.}
 	\label{fig:0}
 \end{figure}
     
    \begin{rmk} An equivalent of Corollary 2 cannot be proved in this setting, at least not with the same strategy of proof. In fact, projective transformations act by time reparametrization only on one energy level, while losing all dynamical information on the other ones. Therefore, applying a projective transformation to one of the systems $(g,V_{\text{Kep}})$ as in Theorem 2 one will get the Zoll property only at one energy losing it in general on the second one. 
    \end{rmk}

\begin{proof}[Proof of Theorem \ref{thm:existencearbitrary}]
   
   Let $0>H_{n-1}>H_{n-2}\dots>H_1>H_0$ be $n$ distinct energy levels and define $h_i = -2H_i$ accordingly. 
   By assumption we have $h_i\ge 2 h_{i-1}$ for all $i$. We thus fall into Case 1 of the proof of Theorem \ref{thm:deformation_fixed_energies} for each pair $(h_i,h_{i-1})$ and can conclude by induction on $n$.
   The various successive extensions are represented in a particular case in Figure \ref{fig:2}.
\end{proof}

    \begin{figure}[h]
\centering
\begin{subfigure}{.5\textwidth}
  \centering
	\includegraphics[width=0.9\textwidth]{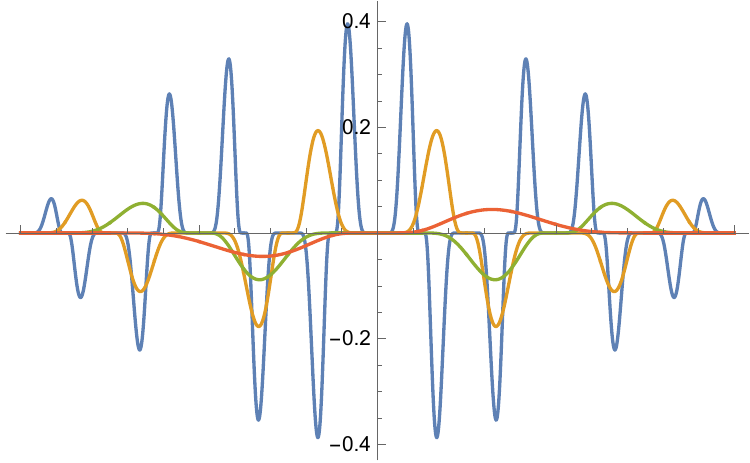}
\end{subfigure}%
\begin{subfigure}{.5\textwidth}
  \centering
  	\includegraphics[width=0.9\textwidth]{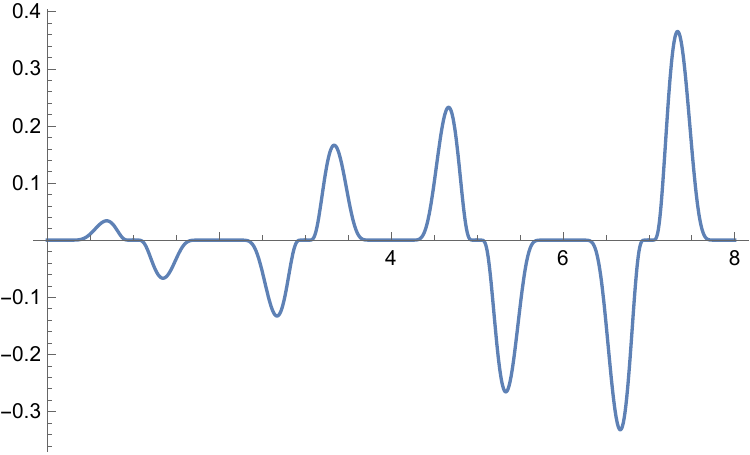}
\end{subfigure}
\caption{A) Three successive extensions of a compactly supported odd function in $[-1,1]$, coloured in red, all drawn as functions on $[-1,1]$. The first extension $F_1$ is depicted in green, the second extension $F_2$ in orange and the third one, $F_3$, in blue. B) The function $F_3(1-k\rho)/(2-k\rho)$ plotted as a function of $\rho$ .}
\label{fig:2}
\end{figure}

\begin{rmk}
Figure \ref{fig:2} suggests that the $L^\infty$-norm of the extensions increases with the number of reflections. This intuition can be checked using the reflection laws \eqref{eq:parity_tilde_f} and \eqref{eq:ftildeodd} at each successive reflection. The details are left to the reader. In particular, the proof of Theorem \ref{thm:existencearbitrary} does not extend to an infinite number of energies. 
\end{rmk}


\section{Rigidity and flexibility phenomena}
\label{sec:proof_rigidity}

\subsection{ Rigidity and flexibility for close energy values}
\label{sec:multipleclose}

As already mentioned in Section \ref{sec:more_levels}, the statement of Theorem \ref{thm:existencearbitrary} for the family given in \eqref{eq:deformationkepler} with $\varphi_\tau\equiv 0$ is in general false if the energy levels are too close to each other, and indeed in this case 
both rigidity and flexibility phenomena occur. We illustrate the situation in more detail in this section. 

Let us consider $h_1>h_2>\dots>h_n>0$ with $n\ge3$. 
Following the proof of Theorem  \ref{thm:deformation_fixed_energies}, we assume that a deformation as in \eqref{eq:deformed_lagrangian_metric} is given on a disk of radius $2/h_n$, and denote by $\tilde f$ any extension of $f$ to the interval $[1-\frac{2h_1}{h_n}, 1]$. Let $\mathcal{H}_i$ be the Hill's region corresponding to $h_i$. Using \eqref{eq:JM_metric_Kepler_h} and \eqref{eq:JM_metric_deformation_h} we see that $\tilde f$ induces the following Jacobi-Maupertuis metrics on $\mathcal H_i$:

    \begin{equation*}
 	g_{JM}^{h_i} = \frac{2-h_i\rho }{\rho}\left(\left(1+\frac{\tilde f \left(1-h_1\rho \right)(2-h_i\rho)}{(2-h_1\rho )(2-h_i\rho)}\right)^2d\rho^2+ \rho^2 d\theta^2\right).
    \end{equation*}
    
We set 
\begin{equation}
	\xi_i := \frac{h_1}{h_i}-1 , \quad \text{for} \quad  i=1\dots n,\label{eq:xi}
\end{equation}
and notice that $0=\xi_1<\xi_{2}<\dots <\xi_{n}<1$. As in the proof of Theorem \ref{thm:existencearbitrary}, a straightforward computation shows that the functions $F_i:[-1,1]\rightarrow \mathbb R$ defined by
\begin{align*}
F_{i} (1-h_i \rho) &:= \frac{\tilde f \left(1-h_1\rho \right)(2-h_i\rho)}{2-h_1\rho}, \quad \forall \rho \in [0,\frac{2}{h_i}],
\end{align*}
are odd if and only if $\tilde f$ satisfies the following reflection laws:
\begin{equation}
\tilde f (R_{-\xi_i}(x)) = - \tilde f(x) \frac{R_{-\xi_i}(x)^2 -1}{x^2-1}, \quad \forall x \in [1-2\xi_i,1], \label{refl:1}
\end{equation}
where $R_{-\xi_i}(x)$ stands for the reflections about $-\xi_i$.
Equations \eqref{refl:1} and 
\begin{equation}
\tilde f(-x) = - \tilde f(x), \quad \forall x \in [-1,1].
\label{refl:0}
\end{equation}
can be stated in a unified way by saying that the function $$G(x):=\frac{\tilde f(x)}{x^2-1}$$ should be odd with respect to the reflections $R_{-\xi_i}$ and $R_{\xi_1}$ about $-\xi_i$ and the origin respectively.
Thus, we are now imposing $n$ symmetry conditions on the function $\tilde f$ in order for the corresponding natural system to be Zoll at the energy levels $\{H=-\frac {h_i}{2}\}$. These conditions are necessary and sufficient. Notice indeed that, by \eqref{refl:1}, the functions $F_i$ are smooth and vanish at $-1$ and $1$ since $f$ vanishes at $1$. 

\subsubsection{Rationally dependent case}
Let us further assume that $\xi_2,...,\xi_n$ are (pairwise) rationally dependent, which is equivalent to saying that $\xi_i \in \mathbb{Q}\xi_2$ for all $i\ge 3.$ This implies that the group $\mathcal{G}$ generated by $\{R_{-\xi_1},\dots R_{-\xi_n}\}$ is discrete. Let us define
 \begin{equation}
 	\label{eq:def_gamma}
 \gamma:= \inf \left \{ a_2\xi_2 + ... + a_n\xi_n >0 \ |\ a_2,...,a_n \in \mathbb Z \right\} >0.
\end{equation}
Clearly, $\mathcal G$ contains the group generated by $\{R_{-\xi_1},R_{-\gamma}\}$ as a subgroup and thus we fall in the scope of Theorem \ref{thm:existencearbitrary}. In particular, the space of deformations preserving the Zoll property on all $\{H=-\frac{h_i}{2}\}$, $i=1,...,n$, contains a copy of $\mathscr{C}^\infty_{\mathrm c}(0,\gamma)$ and as such is infinite dimensional. Thus, we have shown the following

\begin{cor}
\label{cor:cor3}
	Let  $h_1>h_2>\dots>h_n>0$ be such that the corresponding $\xi_2,...,\xi_n$ defined by \eqref{eq:xi} are pairwise rationally dependent. Then, the space of non-equivalent rotationally invariant Riemannian metrics on the plane such that the natural system $(g,V_{\mathrm{Kep}})$ is Zoll on all $\{H=-\frac {h_i}{2}\}$ is infinite dimensional and contains a subset which is in bijection with the space $\mathscr C^\infty_{\mathrm c}(0,\gamma)$ of smooth functions having compact support in $(0,\gamma)$ where $\gamma$ is given in \eqref{eq:def_gamma}. 
\end{cor}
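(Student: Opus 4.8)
The plan is to recast the $n$ parity constraints on $\tilde f$ as the requirement that $G(x)=\tilde f(x)/(x^2-1)$ transform by the sign character of a single reflection group on $\mathbb R$, and then to use the rational-dependence hypothesis to recognise that this group is the infinite dihedral group generated by the two reflections $R_{0}$ and $R_{-\gamma}$. This reduces the $n$-level problem to the two-level problem already solved in Case 3 of the proof of Theorem \ref{thm:deformation_fixed_energies} (with the gap $\gamma$ in place of $\xi$), from which the asserted injection of $\mathscr C^\infty_{\mathrm c}(0,\gamma)$ follows at once.

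Concretely, I would first record that, by the discussion preceding the statement, $(g,V_{\mathrm{Kep}})$ is Zoll on every $\{H=-h_i/2\}$ exactly when $G$ is odd with respect to each reflection $R_{-\xi_i}$, $i=2,\dots,n$, and $R_{0}=R_{-\xi_1}$; indeed the laws \eqref{refl:1}--\eqref{refl:0}, divided by $R_{-\xi_i}(x)^2-1$, read precisely $G(R_{-\xi_i}(x))=-G(x)$. Let $\mathcal G\subset\mathrm{Isom}(\mathbb R)$ be the group generated by these reflections. Since the composition of the reflections about $a$ and $b$ is the translation $x\mapsto x+2(a-b)$, the translation subgroup of $\mathcal G$ is $\{x\mapsto x+2t : t\in\Lambda\}$, where $\Lambda:=\langle \xi_2,\dots,\xi_n\rangle_{\mathbb Z}$ is the additive group generated by the $\xi_i$. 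The rational-dependence hypothesis forces $\Lambda\subset\mathbb Q\,\xi_2$, so $\Lambda$ is a finitely generated subgroup of a one-dimensional $\mathbb Q$-vector space, hence infinite cyclic: $\Lambda=\gamma\mathbb Z$ with $\gamma=\min(\Lambda\cap(0,\infty))$, which is exactly the quantity in \eqref{eq:def_gamma}.

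Because every reflection centre $-\xi_i$ lies in $-\Lambda=\gamma\mathbb Z$, the group $\mathcal G$ coincides with the infinite dihedral group $\langle R_{0},R_{-\gamma}\rangle$, whose reflections are precisely the $R_{k\gamma}$, $k\in\mathbb Z$. A short sign-character computation ($G$ odd about $0$ and $-\gamma$ implies $G$ is $2\gamma$-periodic and hence odd about every $k\gamma$) then shows that $G$ is odd with respect to all the original $R_{-\xi_i}$ if and only if it is odd with respect to the two reflections $R_{0}$ and $R_{-\gamma}$ alone. Since $\gamma\le\xi_2<1$, we are exactly in the setting of Case 3 of Theorem \ref{thm:deformation_fixed_energies} with $\xi$ replaced by $\gamma$: any $\phi\in\mathscr C^\infty_{\mathrm c}(0,\gamma)$ extends, by oddness about $0$ together with $2\gamma$-periodicity, to a unique smooth $G$, and $\tilde f(x)=(x^2-1)G(x)$ defines through \eqref{eq:JM_metric_deformation_h} a smooth rotationally invariant metric that is Zoll at all the $h_i$. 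Distinct $\phi$ produce metrics differing on the Hill's regions, hence non-equivalent, so the image is in bijection with $\mathscr C^\infty_{\mathrm c}(0,\gamma)$ and the deformation space is infinite dimensional.

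The step I expect to require the most care is the group-theoretic reduction: one must verify \emph{both} inclusions giving $\mathcal G=\langle R_0,R_{-\gamma}\rangle$, so that imposing oddness only at $0$ and $-\gamma$ genuinely entails all $n$ original symmetries, and one must use that discreteness of $\Lambda$ is exactly what rational dependence buys (without it $\Lambda$ is dense and $\gamma=0$, which is the source of the rigidity in the complementary case). The remaining delicate point, inherited from Case 3 of Theorem \ref{thm:deformation_fixed_energies}, is the global smoothness of the reflected, $2\gamma$-periodic extension of $G$ across the reflection centres and the vanishing of $\tilde f=(x^2-1)G$ at $x=\pm1$; here taking $\phi$ with compact support in the \emph{open} fundamental domain $(0,\gamma)$ is precisely what guarantees a smooth extension with no matching conditions at the centres and no constraint at the far endpoint $1-2h_1/h_n$.
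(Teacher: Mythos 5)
Your proposal follows essentially the same route as the paper: rational dependence makes the reflection group discrete, its translation lattice is $\gamma\mathbb Z$ with $\gamma$ as in \eqref{eq:def_gamma}, and the problem reduces to Case 3 of the proof of Theorem \ref{thm:deformation_fixed_energies} with $\gamma$ in place of $\xi$, yielding the injection of $\mathscr C^\infty_{\mathrm c}(0,\gamma)$. If anything you are more careful than the paper's one-paragraph argument, which only records the inclusion $\langle R_{0},R_{-\gamma}\rangle\subseteq\mathcal G$, whereas the reverse inclusion (each $R_{-\xi_i}$ lies in $\langle R_{0},R_{-\gamma}\rangle$) is the one actually needed to deduce all $n$ symmetries from oddness about $0$ and $-\gamma$.
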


The next corollary is an immediate consequence of Corollary \ref{cor:cor3}.

\begin{cor}
	Let $h_1>h_2>\dots>h_n$ be fixed. For any $\ve>0$ there exist $h'_1>h'_2>\dots>h'_n$ with $\vert h_i'-h_i\vert<\ve$ for all $i=1,...,n$  and such that the space of non-equivalent rotationally invariant Riemannian metrics on the plane such that the natural system $(g,V_{\mathrm{Kep}})$ is Zoll on $\{H=-\frac {h'_i}{2}\}$ for all $i=1,...,n$ is infinite dimensional and contains a subset which is in bijection with the space $\mathscr C^\infty_{\mathrm c}(0,\gamma)$ of smooth functions having compact support in $(0,\gamma)$ where $\gamma$ is given in \eqref{eq:def_gamma}. 
\end{cor}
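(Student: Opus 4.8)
The plan is to deduce the statement from Corollary \ref{cor:cor3} by observing that its rational-dependence hypothesis is a dense condition among tuples of energies: arbitrarily close to any $h_1 > \dots > h_n$ one finds a tuple whose associated parameters $\xi_i$ are rational, and hence pairwise rationally dependent. Once such a nearby tuple is produced, Corollary \ref{cor:cor3} applies verbatim and delivers both the infinite-dimensionality and the bijection with $\mathscr{C}^\infty_{\mathrm c}(0,\gamma)$.

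Concretely, I would first set $h_1' := h_1$, so that $\xi_1' = 0$ holds exactly, and then perturb only $h_2, \dots, h_n$. Recalling from \eqref{eq:xi} that $\xi_i = h_1/h_i - 1$, equivalently $h_i = h_1/(1+\xi_i)$, and that the regime of Section \ref{sec:multipleclose} corresponds to $0 = \xi_1 < \xi_2 < \dots < \xi_n < 1$, I would use the density of $\mathbb{Q}$ in $\mathbb{R}$ to pick rationals $q_2 < q_3 < \dots < q_n$ in $(0,1)$ with $|q_i - \xi_i|$ as small as desired. Setting $\xi_i' := q_i$ and $h_i' := h_1/(1+q_i)$, the continuity of $q \mapsto h_1/(1+q)$ (which sends $\xi_i$ to $h_i$) guarantees $|h_i' - h_i| < \ve$ provided the $q_i$ are chosen close enough to the $\xi_i$; since $h_1/(1+q)$ is strictly decreasing in $q$, the ordering $h_1' > h_2' > \dots > h_n'$ follows from $0 < q_2 < \dots < q_n$, and by construction $h_1'/h_i' - 1 = q_i = \xi_i'$.

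Each $\xi_i'$ with $i \geq 2$ is then a nonzero rational, so $\xi_i'/\xi_2' \in \mathbb{Q}$ and the family $\xi_2', \dots, \xi_n'$ is pairwise rationally dependent. Applying Corollary \ref{cor:cor3} to $h_1' > \dots > h_n'$ therefore yields the desired infinite-dimensional space of metrics, with $\gamma$ now computed from the perturbed parameters via \eqref{eq:def_gamma}. The argument carries no substantial obstacle; the only points needing a moment of care are that the strict inequalities $0 < \xi_2' < \dots < \xi_n' < 1$ and the ordering of the $h_i'$ are preserved under the perturbation — all open conditions, hence stable for small enough $|q_i - \xi_i|$ — and that making the $\xi_i'$ rational is genuinely enough to force pairwise rational dependence, which holds because a ratio of two nonzero rationals is rational.
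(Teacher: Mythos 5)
Your argument is correct and is exactly the intended one: the paper presents this corollary as an immediate consequence of Corollary \ref{cor:cor3}, the point being that tuples with pairwise rationally dependent $\xi_i$ are dense (achieved by perturbing the $h_i$ so that each $\xi_i'$ becomes rational). Your explicit verification that the ordering and the open inequalities survive a small perturbation, and that rationality of the $\xi_i'$ forces pairwise rational dependence, fills in precisely the details the paper leaves implicit.
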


 An explicit example for the case $n=3$ and $(h_1,h_2,h_3) = (15,12,10)$ is depicted in Figures \ref{fig:3} and \ref{fig:4}.
 
      \begin{figure}[h]
 	\centering
 	\begin{subfigure}{.5\textwidth}
 		\centering
 		\includegraphics[width=0.9\textwidth]{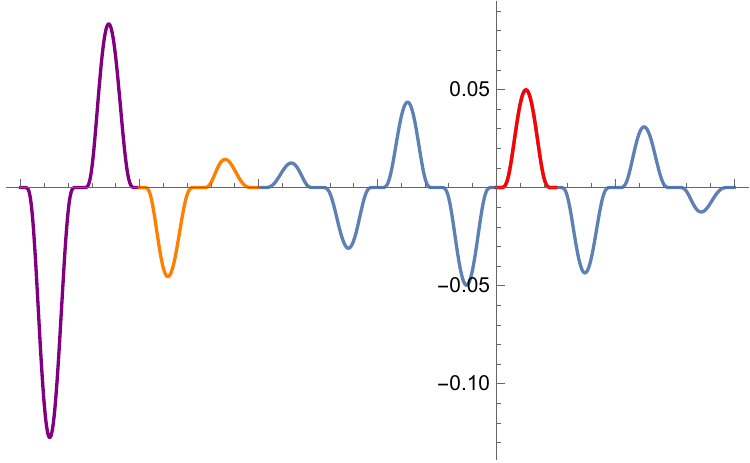}
 	\end{subfigure}%
 	\begin{subfigure}{.5\textwidth}
 		\centering
 		\includegraphics[width=0.9\textwidth]{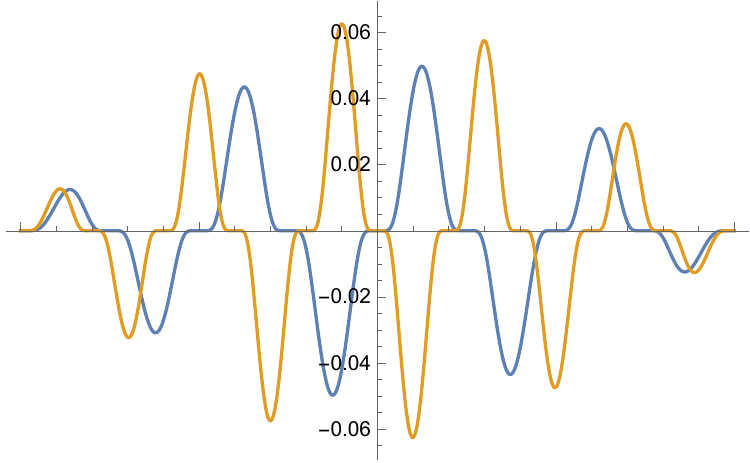}
 	\end{subfigure}
 	\caption{The figure on the left shows the extension $\tilde f$ on  $[-2,1]$. The original $f$ is coloured in blue. In this case, $\gamma=\frac 14$ and the smooth function compactly supported in $(0,1/4)$ is coloured in red. The extensions on $[-3/2,-1]$ and $[-2,-3/2]$ are drawn in orange and purple respectively. The figure on the right shows $F_2$ (in yellow) and $f$ (in blue).}
 	\label{fig:3}
 \end{figure}
 
 \begin{figure}[h]
 	\centering
 	\begin{subfigure}{.5\textwidth}
 		\centering
 		\includegraphics[width=0.9\textwidth]{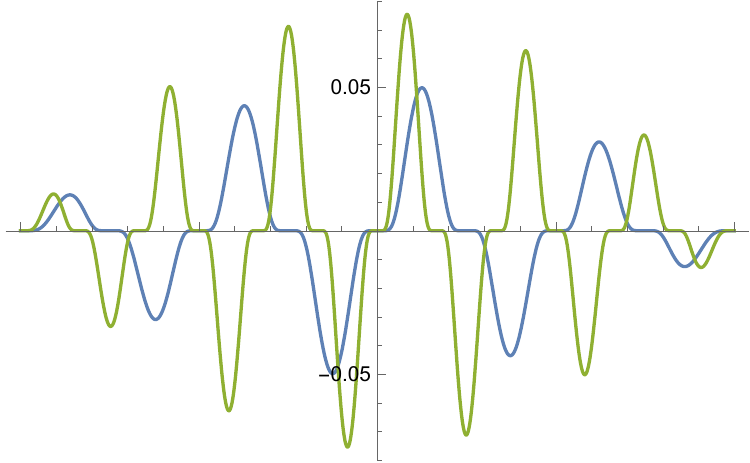}
 	\end{subfigure}%
 	\begin{subfigure}{.5\textwidth}
 		\centering
 		\includegraphics[width=0.9\textwidth]{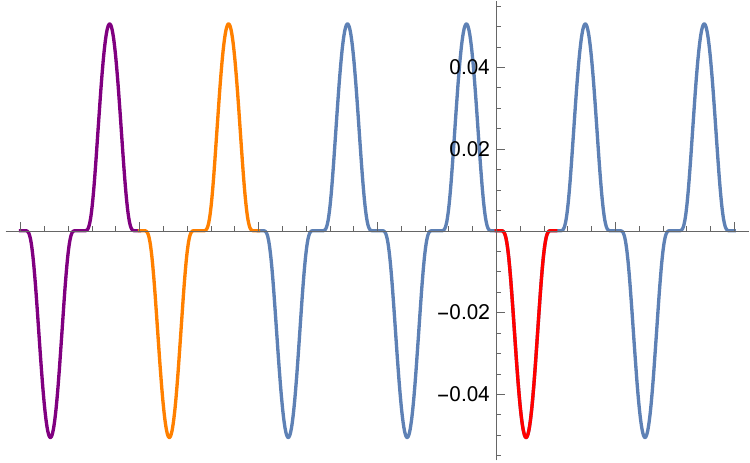}
 	\end{subfigure}
 	\caption{The figure on the left shows $F_3$ (green) and $f$ (blue). The function $G(x)=\tilde{f}(x)/(x^2-1)$ is depicted on the right.}
 	\label{fig:4}
 \end{figure}

\subsubsection{Rationally independent case}
Here we consider the case in which at least two of the $\xi_i$, $i=2,...,n$, are rationally independent. Since the argument we are going to present does not involve the other energy levels we can assume without loss of generality that $n=3$ and set $h_{i_1} = \kappa> h_{i_2}= \ell$ and $h_1 = h$. Thus, assume  $\xi_\kappa$ and $\xi_\ell$ are rationally independent and additionally satisfy the following ``smallness'' condition
\begin{equation}
0< \xi_\kappa+\xi_\ell <1. \label{eq:smallness}
\end{equation}
We will show that \eqref{refl:1} and \eqref{refl:0} force $f$ to vanish identically on $[-1,1]$, thus also $\tilde f$ to vanish identically on $[1-\frac{2h}\ell,1]$. 

\begin{lemma}
Assume that $0<\ell<\kappa<h<2\ell$ are so chosen that $\xi_\ell$ and $\xi_\kappa$ as defined in \eqref{eq:xi} are rationally independent and satisfy condition \eqref{eq:smallness}. Then, the following holds: if
$\tilde f: [1-\frac{2h}\ell,1]\rightarrow \mathbb R$ is a continuous function satisfying \eqref{refl:1} and \eqref{refl:0}, then $\tilde f$ is identically zero. 
\label{lem:1}
\end{lemma}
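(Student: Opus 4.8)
The plan is to reformulate everything in terms of the function $G(x):=\tilde f(x)/(x^2-1)$, which is continuous on $[1-\frac{2h}{\ell},1]=[-1-2\xi_\ell,1]$ away from $x=\pm1$ and which, by \eqref{refl:1} and \eqref{refl:0}, is \emph{odd} about each of the three centers $0$, $-\xi_\kappa$ and $-\xi_\ell$, on the respective intervals where those identities are asserted. I would then exploit the elementary facts that the composition $R_a\circ R_b$ of two reflections is the translation by $2(a-b)$, and that $G$ changes sign under a reflection but is preserved under a translation. The two basic translations so produced have lengths $2\xi_\kappa$ (from the pair $0,-\xi_\kappa$) and $2(\xi_\ell-\xi_\kappa)$ (from the pair $-\xi_\kappa,-\xi_\ell$); since $\xi_\kappa$ and $\xi_\ell$ from \eqref{eq:xi} are rationally independent, so are these two lengths, whence the group generated by $R_0,R_{-\xi_\kappa},R_{-\xi_\ell}$ has translation part a \emph{dense} additive subgroup of $\mathbb R$, and its set of reflection centres (the orbit of $0$) is dense.

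Were all of these identities valid globally, the conclusion would be immediate: at any centre $c$ the oddness of $G$ forces $G(c)=-G(c)$, hence $G(c)=0$, so $G$ would vanish on a dense set and therefore, being continuous, vanish identically, giving $\tilde f\equiv0$. The whole difficulty is that \eqref{refl:1} holds only on the boundary intervals $[1-2\xi_i,1]$, which do \emph{not} contain the centres $-\xi_i$, so none of the three reflections is a priori usable near its own centre. The core of the proof is therefore a \emph{propagation of the domains of validity}: using the reflection about $0$, which by \eqref{refl:0} is valid on all of $(-1,1)$ and serves as a hub, I would realize the two translations above as honest self-maps of genuine subintervals of $(-1,1)$ and then chain them so as to enlarge, step by step, the set on which $G$ is known to be invariant.

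This is exactly where the smallness assumption \eqref{eq:smallness} enters. One checks, for instance, that combining \eqref{refl:0} with \eqref{refl:1} yields $G(x)=G(x+2\xi_i)$ for $x\in[-1-2\xi_i,-1]$, and that on the overlap $x\in[-1-2\xi_\kappa,-1]$ the cases $i=\kappa,\ell$ together force invariance under the single length $2(\xi_\ell-\xi_\kappa)$ on the right neighbourhood $[-1,-1+2\xi_\kappa]$ of $-1$; reflecting such relations through the hub $0$ transports them to a neighbourhood of $+1$, and the condition $\xi_\kappa+\xi_\ell<1$ is precisely what keeps the reflected and translated intervals inside $(-1,1)$, rather than escaping past the degeneracy points $x=\pm1$, so that the various intervals of validity overlap instead of leaving gaps. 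I would organise this bookkeeping as an open--closed argument: let $\Omega\subset(-1,1)$ be the set of points admitting a neighbourhood on which $G$ is invariant under both basic translations; $\Omega$ is open by construction and nonempty by the previous step, and the reflection laws together with \eqref{eq:smallness} show it is also closed in the connected interval $(-1,1)$, whence $\Omega=(-1,1)$.

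Once this is in place, $G$ is invariant under a dense subgroup of translations on a genuine interval, hence constant there by continuity, and oddness about $0$ forces that constant to be $0$. Feeding $G\equiv0$ back into \eqref{refl:1} and \eqref{refl:0} then spreads the vanishing across the whole connected domain $[-1-2\xi_\ell,1]$, giving $\tilde f\equiv0$ and in particular $f\equiv0$ on $[-1,1]$. I expect the domain propagation of the third paragraph to be the main obstacle: the reflection algebra is trivial, but verifying that a suitable chain of \emph{valid} reflections reaches a dense set of centres near every point is delicate, and it is here that both rational independence (for density) and \eqref{eq:smallness} (for the overlaps) are indispensable; it is also precisely this step that breaks down, producing flexibility rather than rigidity, when the smallness condition is dropped.
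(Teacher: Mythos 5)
Your overall architecture is the right one and matches the paper's in spirit: pass to $G(x)=\tilde f(x)/(x^2-1)$ (equivalently, track $\tilde f$ itself up to the nonvanishing factors), generate translations from pairs of reflections, use rational independence of $\xi_\kappa,\xi_\ell$ for density, use \eqref{eq:smallness} to stay inside the domain, and finish by continuity. However, the step you yourself flag as the main obstacle --- the ``propagation of the domains of validity'' --- is exactly the mathematical content of the lemma, and your open--closed argument does not supply it. First, the set $\Omega$ is not well posed: near the right endpoint of $(-1,1)$ the translate $y+2\xi_\kappa$ leaves the domain of $G$ altogether, so ``invariant under both basic translations on a neighbourhood'' cannot hold there, and $\Omega$ cannot be all of $(-1,1)$ under any literal reading. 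Second, and more seriously, even granting that $G(y)=G(y+2\xi_\kappa)$ and $G(y)=G(y+2(\xi_\ell-\xi_\kappa))$ hold \emph{wherever both sides are defined} (which is in fact automatic from \eqref{refl:1} and \eqref{refl:0}, no open--closed argument needed), this only says $G$ is constant on the orbit classes of a \emph{partially defined} action; it does not yield invariance under the dense group generated by the two lengths, because applying a word in the generators requires every intermediate point of the chain to remain in the interval where the relevant reflection law holds. Deducing constancy from two rationally independent ``partial periods'' on an interval is a continuous Fine--Wilf-type statement whose proof is precisely the subtractive Euclidean algorithm, and the threshold for it is precisely $2\xi_\kappa+2\xi_\ell<2$, i.e.\ \eqref{eq:smallness}; none of this is carried out in your proposal.

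For comparison, the paper does not argue for constancy of $G$ at all: it starts from the single zero $f(0)=0$ forced by \eqref{refl:0} and propagates it along an explicitly constructed orbit of $0$ under the three reflections. The construction is an induction producing lengths $\gamma_1>\gamma_2>\cdots$ with $\gamma_{m}\downarrow 0$ (each $\gamma_{m+1}$ obtained from $\gamma_m$ and the previous length by repeated subtraction, which terminates only if the lengths are rationally dependent), together with the verification --- and this is where \eqref{eq:smallness} is used at every stage --- that each translation $T_{2\gamma_m}$ is realized by a word whose partial orbit never leaves $[-1,1]$, the only interval on which the $R_0$-law is available. This yields a dense orbit of $0$ inside $[-1,1]$ on which $f$ vanishes, whence $f\equiv 0$ on $[-1,1]$ by continuity, and then \eqref{refl:1} kills $\tilde f$ on the rest of $[1-\tfrac{2h}{\ell},1]$. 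A correct completion of your version would have to prove the same confinement statement for the chains realizing your small translations; as written, the proposal has a genuine gap at its central step. (A minor additional point: the reflection law about $-\xi_i$ is valid on the full symmetric interval $[-1-2\xi_i,1]$, as in the proof of Theorem \ref{thm:deformation_fixed_energies}, so the centres $-\xi_i$ themselves are not the problem; the problem is confining long chains to $[-1,1]$. Also note that $G$ need only be continuous away from $x=\pm1$, so the final limiting argument should be run on $\tilde f$ rather than on $G$.)
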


\begin{proof}
By \eqref{refl:0}, $f$ (and hence $\tilde f$) must vanish at $x=0$. The idea now is that, using the reflections $R_0$, $R_{-\xi_\kappa}$ and $R_{-\xi_\ell}$, we can map $x=0$ onto a dense subset $\mathcal D \subset [-1,1]$ in such a way that, throughout the orbit, the point $x=0$ never leaves the interval $[-1,1]$. Equations \eqref{refl:1} and \eqref{refl:0}, which must hold in particular for all $x\in [-1,1]$, then imply that $f$ vanishes on $\mathcal D$ and hence on $[-1,1]$. 

To this purpose, notice that for all $\xi\in\mathbb R$ we have
$$R_0\circ R_{\xi} = T_{-2\xi}, \quad R_\xi\circ R_0 = T_{2\xi},$$
where $T_\nu:\mathbb R\rightarrow \mathbb R$ denotes the translation by $\nu\in\mathbb R$. This implies that the group $\mathcal G$ of isometries of $\mathbb R$ generated by $R_0$, $R_{-\xi_\kappa}$ and $R_{-\xi_\ell}$ contains the translations $T_{ 2 n\xi_\kappa}, T_{ 2 n \xi_\ell}$ for $n\in \mathbb{Z}$ as well as the reflections $R_{m \xi_\kappa}$ and $R_{m \xi_\ell}$ for $m\in \mathbb{Z}$.

Condition \eqref{eq:smallness} implies that $T_{\pm 2\xi_\kappa}(0)$ are again contained in the interval $[-1,1]$. Set for instance $x_1:= T_{2\xi_\kappa}(0)$.
Thanks to \eqref{eq:smallness},  at least one between 
$T_{2\xi_\kappa}(x_1)$ and $T_{-2\xi_\ell}(x_1)$ is again contained in $[-1,1]$.
Suppose that the second possibility occur (the argument in the first case being completely analogous), and observe that 
$$x_2 := T_{-2\xi_\ell}(x_1) = T_{2\xi_\kappa-2\xi_\ell}(0)\in [-1,0).$$
Now let $n_1\in \mathbb N$ be the only natural number such that
$$x_3:= T_{2n_1\xi_\kappa}(x_2) = T_{2(n_1+1)\xi_\kappa - 2 \xi_\ell}(0
)\in (0,x_1)$$ 
and set $\gamma_1:= (n_1+1)\xi_\kappa - \xi_\ell$. By construction, $0<\gamma_1<\xi_\kappa$, $T_{2\gamma_1}$ is in the group $\mathcal G$ of isometries generated by $R_0$, $R_{-\xi_\kappa}$ and $R_{-\xi_\ell}$, and the ``orbit'' $0,x_1,x_2,x_3$ is entirely contained in $[-1,1]$. Repeating the same argument starting from $x_3$ and replacing the translations $T_{\pm 2 \xi_\kappa}, T_{\pm 2\xi_\ell}$ with $T_{\pm 2 \gamma_1}, T_{\pm 2 \xi_\kappa}$ we obtain $0<\gamma_2<\gamma_1$ such that the orbit $0,x_1,x_2,...,x_6$ is entirely contained in $[-1,1]$. Iterating this procedure, and using the fact that $\xi_\kappa$ and $\xi_\ell$ are rationally independent, we obtain a sequence $\gamma_m\downarrow 0$ such that all $T_{2 \gamma_m}$ is in $\mathcal G$ for all $m\in \mathbb N$ and an ``orbit'' starting from $x=0$ which is dense in $[-1,1]$. The reflection laws \eqref{refl:1} and \eqref{refl:0} now imply that $f$ vanishes on the whole orbit, thus completing the proof.
\end{proof}

The next result is an immediate corollary of the lemma above.

\begin{cor}
Assume that $h_1>h_2>\dots>h_n$ are such that $\xi_i$ and $\xi_j$ as defined in \eqref{eq:xi} are rationally independent for some $i\neq j$ and satisfy the ``smallness'' condition \eqref{eq:smallness}. Then the following rigidity statement holds: if $g$ is a rotationally invariant Riemannian metric on the plane such that the natural system $(g,V_{\mathrm{Kep}})$ is Zoll on $\{H=-\frac {h_i}2\}$ for all $i$, then $g$ is equivalent to the flat metric. 
\end{cor}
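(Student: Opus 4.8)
The plan is to read off this statement directly from Lemma \ref{lem:1}; the only work is to verify that its hypotheses are met and to interpret the conclusion. Recall from the reduction carried out at the beginning of this subsection that a rotationally invariant metric $g$ for which $(g,V_{\mathrm{Kep}})$ is Zoll on every $\{H=-\tfrac{h_i}2\}$ corresponds, via \eqref{eq:JM_metric_deformation_h}, to a continuous extension $\tilde f$ of its (odd, Zoll) profile $f$ at the top energy $-\tfrac{h_1}2$ to the interval $[1-\tfrac{2h_1}{h_n},1]$, and that being Zoll on all the levels is equivalent to $\tilde f$ satisfying the reflection laws \eqref{refl:1} for every $i$ together with \eqref{refl:0}. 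Conversely, $f\equiv 0$ means that the Jacobi--Maupertuis metric at the reference energy equals the Kepler metric \eqref{eq:JM_metric_Kepler_h}, i.e. that $g$ is equivalent to the flat metric. Thus it suffices to prove $f\equiv 0$.

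Choose indices $i\neq j$ for which $\xi_i,\xi_j$ are rationally independent and satisfy the smallness condition \eqref{eq:smallness}. Since $\xi_1=0$ cannot be rationally independent of anything, we have $i,j\ge 2$, hence $h_i,h_j<h_1$. Relabel the pair so that $h_i>h_j$ and put $\kappa:=h_i$, $\ell:=h_j$, $h:=h_1$; then $\ell<\kappa<h$ and, in the notation of \eqref{eq:xi}, $\xi_\kappa=\xi_i$ and $\xi_\ell=\xi_j$. The remaining numerical hypothesis of Lemma \ref{lem:1}, namely $h<2\ell$, is automatic: from \eqref{eq:smallness} we get $\xi_\ell<\xi_\kappa+\xi_\ell<1$, that is $h/\ell<2$. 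Now restrict $\tilde f$ to $[1-\tfrac{2h}{\ell},1]$. By the previous paragraph this restriction is continuous and satisfies \eqref{refl:1} for $i=\kappa,\ell$ as well as \eqref{refl:0}, so all the hypotheses of Lemma \ref{lem:1} hold.

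Applying Lemma \ref{lem:1} we conclude that $\tilde f$ vanishes identically on $[1-\tfrac{2h}{\ell},1]$; in particular $f=\tilde f|_{[-1,1]}\equiv 0$, and therefore $g$ is equivalent to the flat metric, as claimed. (If one further wants $g$ to agree with the flat metric on every Hill's region $\mathcal H_k$, one feeds $f\equiv 0$ back into the reflection laws \eqref{refl:1} at the remaining energies to propagate the vanishing of $\tilde f$ outward along the orbits of the corresponding reflections; this is routine.) I expect no genuine obstacle in this argument: all of the analytic substance is contained in Lemma \ref{lem:1}, and what remains is the bookkeeping of identifying $h_1$ as the reference energy so that the pair $(\xi_i,\xi_j)$ supplies precisely the two reflection laws required by the lemma, together with the one-line check that \eqref{eq:smallness} yields the range condition $h<2\ell$.
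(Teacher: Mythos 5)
Your proposal is correct and follows the paper's route exactly: the paper treats this corollary as an immediate consequence of Lemma \ref{lem:1}, and your argument simply supplies the (correct) bookkeeping the paper leaves implicit — that $i,j\ge 2$, that the range condition $h_1<2h_j$ follows from \eqref{eq:smallness}, and that $\tilde f\equiv 0$ translates back into $g$ being equivalent to the flat metric. No discrepancies with the paper's reasoning.
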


\begin{rmk} 
As shown by Theorem \ref{thm:existencearbitrary}, some ``smallness'' condition is needed in order to achieve rigidity as in the corollary above. It is however not clear to us whether \eqref{eq:smallness} is optimal in this sense or not. 
\end{rmk}

 \bibliography{_biblio}
\bibliographystyle{plain}

\end{document}